\renewcommand{\phi}{\varphi}
\renewcommand{\leq}{\leqslant}
\renewcommand{\geq}{\geqslant}
\renewcommand{\epsilon}{\varepsilon}
\renewcommand{\kappa}{\varkappa}
 \DeclareMathOperator{\ind}{ind}
\DeclareMathOperator{\cyl}{cyl}
\DeclareMathOperator{\sd}{sd} 
 \DeclareMathOperator{\Map}{Map}
 \DeclareMathOperator{\cone}{cone}
\DeclareMathOperator{\Hom}{Hom} 
 \DeclareMathOperator{\id}{id}
\DeclareMathOperator{\Alg}{Alg} \DeclareMathOperator{\colim}{colim}
 \DeclareMathOperator{\kr}{Ker}
 \DeclareMathOperator{\Mod}{Mod}
 \DeclareMathOperator{\Ob}{Ob}
\DeclareMathOperator{\CAlg}{CAlg}
\newcommand{\sn}{\par\smallskip\noindent}
\newcommand {\lp}{\colim}
\newcommand{\lra}[1]{\bl{#1}\longrightarrow\relax}
\newcommand{\bl}[1]{\buildrel #1\over}
\newcommand{\shs}{SH_{S^1}}
\newcommand{\shsg}{SH_{S^1,\bb G}}
\newcommand{\cc}{\mathcal}
\newcommand{\bb}{\mathbb}
\newcommand{\ff}{\mathfrak}
\newcommand{\op}{{\textrm{\rm op}}}
\newcommand{\wt}{\widetilde}
\newcommand{\ahaw}{{\Alg_{k}}}
\newcommand{\cahaw}{{\CAlg}_k}
\newcommand{\inda}{{\Alg_k^{\ind}}}
\newtheorem{thm}{Theorem}[section]
\newtheorem{prop}[thm]{Proposition}
\newtheorem{cor}[thm]{Corollary}
\newtheorem{lem}[thm]{Lemma}
\newtheorem*{questions}{Questions}
\newtheorem*{rem}{Remark}
\newtheorem*{exs}{Examples}
\newtheorem*{defs}{Definition}
\begin{document}

\footskip30pt


\title{Algebraic Kasparov K-theory. II}
\author{Grigory Garkusha}
\address{Department of Mathematics, Swansea University, Singleton Park, Swansea SA2 8PP, UK}
\email{G.Garkusha@swansea.ac.uk}

\urladdr{http://math.swansea.ac.uk/staff/gg/}

\keywords{Bivariant algebraic K-theory, homotopy theory of algebras,
triangulated categories}

\subjclass[2010]{19K35, 19D25, 19D50, 55P99}

\begin{abstract}
A kind of motivic stable homotopy theory of algebras is developed.
Explicit fibrant replacements for the $S^1$-spectrum and $(S^1,\bb
G)$-bispectrum of an algebra are constructed. As an application,
unstable, Morita stable and stable universal bivariant theories are
recovered. These are shown to be embedded by means of contravariant
equivalences as full triangulated subcategories of compact
generators of some compactly generated triangulated categories.
Another application is the introduction and study of the symmetric
monoidal compactly generated triangulated category of $K$-motives.
It is established that the triangulated category $kk$ of
Corti\~{n}as--Thom~\cite{CT} can be identified with the $K$-motives
of algebras. It is proved that the triangulated category of
$K$-motives is a localization of the triangulated category of
$(S^1,\bb G)$-bispectra. Also, explicit fibrant $(S^1,\bb
G)$-bispectra representing stable algebraic Kasparov $K$-theory and
algebraic homotopy $K$-theory are constructed.
\end{abstract}
\maketitle

\thispagestyle{empty} \pagestyle{plain}

\newdir{ >}{{}*!/-6pt/@{>}} 


\section{Introduction}

Throughout the paper $k$ is a fixed commutative ring with unit and
$\ahaw$ is the category of non-unital $k$-algebras and non-unital
$k$-homomorphisms. Also, $F$ is a fixed field and $Sm/F$ is the category
of smooth algebraic varieties over $F$. If $\cc C$ is a category and $A,B$
are objects of $\cc C$, we shall often write $\cc C(A,B)$ to denote
the Hom-set $\Hom_{\cc C}(A,B)$.

$\bb A^1$-homotopy theory is the homotopy theory of motivic spaces,
i.e. presheaves of simplicial sets defined on $Sm/F$
(see~\cite{MV,VoeICM}). Each object $X\in Sm/F$ is regarded as the
motivic space $\Hom_{Sm/F}(-,X)$. The affine line $\bb A^1$ plays
the role of the interval.

$k[t]$-homotopy theory is the homotopy theory of simplicial functors
defined on non-unital algebras, where each algebra $A$ is regarded
contravariantly as the space $rA=\Hom_{\ahaw}(A,-)$ so that we can
study algebras from a homotopy theoretic viewpoint
(see~\cite{Gar,Gark}). The role of the interval is played by the
space $r(k[t])$ represented by the polynomial algebra $k[t]$. This
theory borrows methods and approaches from $\bb A^1$-homotopy
theory. Another source of ideas and techniques for $k[t]$-homotopy
theory originates in Kasparov $K$-theory of $C^*$-algebras.

In~\cite{Gar} a kind of unstable motivic homotopy theory of algebras
was developed. In order to develop stable motivic homotopy theory of
algebras and -- most importantly -- to make the explicit computations
of fibrant replacements for suspension spectra $\Sigma^\infty rA$, $A\in\ahaw$,
presented in this paper, one first needs to introduce and study
``unstable, Morita stable and stable Kasparov $K$-theory spectra"
$\bb K(A,B)$, $\bb K^{mor}(A,B)$ and $\bb K^{st}(A,B)$ respectively,
where $A,B$ are algebras. We refer the reader to~\cite{Gark} for
properties of the spectra. The aim of this paper is to develop
stable motivic homotopy theory of algebras.

Throughout we work with a certain small subcategory $\Re$ of $\ahaw$
and the category $U_\bullet\Re$ of certain pointed simplicial
functors on $\Re$. $U_\bullet\Re$ comes equipped with a motivic
model structure. We write $Sp(\Re)$ to denote the stable model category of
$S^1$-spectra associated with the model category $U_\bullet\Re$.
$\bb K(A,-)$, $\bb K^{mor}(A,-)$ and $\bb K^{st}(A,-)$ are examples
of fibrant $\Omega$-spectra in $Sp(\Re)$ (see~\cite{Gark}).

One of the main results of the paper says that $\bb K(A,-)$ is a
fibrant replacement for the suspension spectrum $\Sigma^\infty rA\in Sp(\Re)$
of an algebra $A\in\Re$. Namely, there is a natural weak equivalence of
spectra
   $$\Sigma^\infty rA\lra{}\bb K(A,-)$$
in $Sp(\Re)$ (see Theorem~\ref{mainres}).

This is an analog of a similar result by the author and
Panin~\cite{GP3} computing a fibrant replacement of the suspension
$\bb P^1$-spectrum $\Sigma^\infty_{\bb P^1} X_+$ of a smooth
algebraic variety $X$. The main reason that computation of a fibrant
replacement for $\Sigma^\infty_{\bb P^1} X_+$ is possible is the
existence of framed correspondences of Voevodsky~\cite{Voe2} on
homotopy groups of (motivically fibrant) $\bb P^1$-spectra. In turn,
the main reason why the computation of a fibrant replacement for
$\Sigma^\infty rA$ is possible is that algebras have universal
extensions.

Let $\shs(\Re)$ denote the homotopy category of $Sp(\Re)$.
$\shs(\Re)$ plays the same role as the stable homotopy category of
motivic $S^1$-spectra $\shs(F)$ over a field $F$. It is a compactly
generated triangulated category with compact generators
$\{\Sigma^\infty rA[n]\}_{A\in\Re,n\in\bb Z}$. One of the important
consequences of the above computation is that we are able to give an
{\it explicit\/} description of the Hom-groups
$\shs(\Re)(\Sigma^\infty rB[n],\Sigma^\infty rA)$. Precisely, there
is an isomorphism of abelian groups (see Corollary~\ref{rbra})
   $$\shs(\Re)(\Sigma^\infty rB[n],\Sigma^\infty rA)\cong\bb K_n(A,B),\quad A,B\in\Re,n\in\bb Z.$$
It is important to note that the groups $\bb K_n(A,B)$ have an {\it explicit\/}
description in terms of non-unital algebra homomorphisms (see~\cite[Section~7]{Gark} for details).

We also show in Theorem~\ref{embed} that the full subcategory
$\cc S$ of $\shs(\Re)$ spanned by the compact generators
$\{\Sigma^\infty rA[n]\}_{A\in\Re,n\in\bb Z}$ is triangulated and
there is a contravariant equivalence of triangulated categories
   $$D(\Re,\ff F)\lra{\sim}\cc S$$
with $\Re\to D(\Re,\ff F)$ the universal unstable excisive
homotopy invariant homology theory in the sense of~\cite{Gar1} with
respect to the class of $k$-split surjective algebra homomorphisms
$\ff F$. This equivalence is an extension of the contravariant
functor $A\in\Re\mapsto\Sigma^\infty rA\in\shs(\Re)$ to $D(\Re,\ff
F)$. Thus $D(\Re,\ff F)$ is recovered from $\shs(\Re)$. It also
follows that the {\it small\/} triangulated category $D(\Re,\ff
F)^{\op}$ lives inside the ``big" ambient triangulated category
$\shs(\Re)$. This is reminiscent of Voevodsky's Theorem~\cite{Voe1}
saying that there is a full embedding of the {\it small\/}
triangulated category $DM^{eff}_{gm}(F)$ of effective geometrical
motives into the ``big" triangulated category $DM^{eff}(F)$ of
motivic complexes of Nisnevich sheaves with transfers.

Next, we introduce matrices into the game. Namely, if we
localize $\shs(\Re)$ with respect to the family of compact objects
   $$\{\cone(\Sigma^\infty r(M_nA)\to\Sigma^\infty rA)\}_{n>0}$$
we shall get a compactly generated triangulated category
$\shs^{mor}(\Re)$ with compact generators $\{\Sigma^\infty
rA[n]\}_{A\in\Re,n\in\bb Z}$. It is in fact the homotopy category of
a model category $Sp_{mor}(\Re)$, which is the same category as $Sp(\Re)$
but with a new model structure. We construct in a similar way a
compactly generated triangulated category $\shs^{\infty}(\Re)$,
obtained from $\shs(\Re)$ by localization with respect to the family
of compact objects
   $$\{\cone(\Sigma^\infty r(M_\infty A)\to\Sigma^\infty rA)\},$$
where $M_\infty A=\cup_nM_n A$. It is also the homotopy category of
a model category $Sp_{\infty}(\Re)$, which is the same category as $Sp(\Re)$
but with a new model structure.

We prove in Theorems~\ref{mainresmor} and~\ref{mainresst} that for any
algebra $A\in\Re$ there are natural weak equivalences of
spectra
   $$\Sigma^\infty rA\lra{}\bb K^{mor}(A,-)$$
and
   $$\Sigma^\infty rA\lra{}\bb K^{st}(A,-)$$
in $Sp_{mor}(\Re)$ and $Sp_{\infty}(\Re)$ respectively.
Also, for all $A,B\in\Re$ and
$n\in\bb Z$ there are isomorphisms of abelian groups
   $$\shs^{mor}(\Re)(\Sigma^\infty rB[n],\Sigma^\infty rA)\cong\bb K^{mor}_n(A,B)$$
and
   $$\shs^{\infty}(\Re)(\Sigma^\infty rB[n],\Sigma^\infty rA)\cong\bb K^{st}_n(A,B)$$
respectively. Furthermore, the full
subcategories $\cc S_{mor}$ and $\cc S_{\infty}$ of
$\shs^{mor}(\Re)$ and $\shs^{\infty}(\Re)$ spanned by the compact
generators $\{\Sigma^\infty rA[n]\}_{A\in\Re,n\in\bb Z}$ are
triangulated and there are contravariant equivalences of
triangulated categories
   $$D_{mor}(\Re,\ff F)\lra{\sim}\cc S_{mor}$$
and
   $$D_{st}(\Re,\ff F)\lra{\sim}\cc S_{\infty}.$$
Here $\Re\to D_{mor}(\Re,\ff F)$ (respectively $\Re\to
D_{st}(\Re,\ff F)$) is the universal Morita stable (respectively
stable) excisive homotopy invariant homology theory in the sense
of~\cite{Gar1}. Thus $D_{mor}(\Re,\ff F)$ and $D_{st}(\Re,\ff F)$
are recovered from $\shs^{mor}(\Re)$ and $\shs^{st}(\Re)$
respectively. It also follows that the {\it small\/} triangulated
categories $D_{mor}(\Re,\ff F)^{\op}$, $D_{st}(\Re,\ff F)^{\op}$
live inside the ambient triangulated categories $\shs^{mor}(\Re)$
and $\shs^{st}(\Re)$.

We next introduce a symmetric monoidal compactly generated
triangulated category of $K$-motives $DK(\Re)$ together with a
canonical contravariant functor
   $$M_K:\Re\to DK(\Re).$$
The category $DK(\Re)$ is an analog of the triangulated category of
$K$-motives for algebraic varieties introduced in~\cite{GP,GP1}.

For any algebra $A\in\Re$ its $K$-motive is, by definition, the
object $M_K(A)$ of $DK(\Re)$. We have that
   $$M_K(A)\otimes M_K(B)\cong M_K(A\otimes B)$$
for all $A,B\in\Re$ (see Proposition~\ref{kmo}).

We prove in Theorem~\ref{kmotives} that for any two algebras $A,B\in\Re$ and any integer $n$
there is a natural isomorphism
   \begin{equation*}
    DK(\Re)(M_K(B)[n],M_K(A))\cong\bb K^{st}_n(A,B).
   \end{equation*}
Moreover, if $\cc T$ is the full subcategory of $DK(\Re)$ spanned by
$K$-motives of algebras $\{M_K(A)\}_{A\in\Re}$ then $\cc T$ is
triangulated and there is an equivalence of triangulated categories
   $$D_{st}(\Re,\ff F)\to\cc T^{\op}$$
sending an algebra $A\in\Re$ to its $K$-motive $M_K(A)$ (see Theorem~\ref{kmotives}).
It is also proved in Corollary~\ref{kak} that for any algebra $A\in\Re$ and any integer $n$ one
has a natural isomorphism
   \begin{equation*}
    DK(\Re)(M_K(A)[n],M_K(k))\cong KH_n(A),
   \end{equation*}
where the right hand side is the $n$-th homotopy $K$-theory group in
the sense of Weibel~\cite{W1}. This result is reminiscent of a
similar result for $K$-motives of algebraic varieties in the sense
of~\cite{GP,GP1} identifying the $K$-motive of the point with
algebraic $K$-theory.

In~\cite{CT} Corti\~nas--Thom constructed a universal excisive
homotopy invariant and $M_\infty$-invariant homology theory on all
$k$-algebras
   $$j:\ahaw\to kk.$$
The triangulated category $kk$ is an analog of Cuntz's triangulated
category $kk^{lca}$ whose objects are the locally convex
algebras~\cite{Cu2,Cu,Cu1}.

We show in Theorem~\ref{kktop} that, if we denote by $kk(\Re)$ the
full subcategory of $kk$ spanned by the objects from $\Re$ and
assume that the cone ring $\Gamma k$ in the sense of~\cite{KV} is in
$\Re$, then there is a natural triangulated equivalence
   $$kk(\Re)\lra{\sim}\cc T^{\op}$$
sending $A\in kk(\Re)$ to its $K$-motive $M_K(A)$. Thus we can
identify $kk(\Re)$ with the $K$-motives of algebras. It also follows
that the {\it small\/} triangulated category $kk(\Re)^{\op}$ lives
inside the ambient triangulated category $DK(\Re)$.

One of the equivalent approaches to stable motivic homotopy theory
in the sense of Morel--Voevodsky~\cite{MV} is the theory of
$(S^1,\bb G_m)$-bispectra. The role of $\bb G_m$ in our context is
played by the representable functor $\bb G:=r(\sigma)$, where
$\sigma=(t-1)k[t^{\pm 1}]$. We develop the motivic theory of
$(S^1,\bb G)$-bispectra. As usual they form a model category which
we denote by $Sp_{\bb G}(\Re)$. The homotopy category $SH_{S^1,\bb
G}(\Re)$ of $Sp_{\bb G}(\Re)$ plays the same role as the stable
motivic homotopy category $SH(F)$ over a field $F$. We construct an
explicit fibrant $(S^1,\bb G)$-bispectrum $\Theta^\infty_{\bb
G}\bb{KG}(A,-)$, obtained from fibrant $S^1$-spectra $\bb
K(\sigma^nA,-)$, $n\geq 0$, by stabilization in the
$\sigma$-direction.

The main computational result for bispectra (Theorem~\ref{maingsp}) says that
$\Theta^\infty_{\bb G}\bb{KG}(A,-)$ is a fibrant replacement of the
suspension bispectrum associated with an algebra $A$. Namely, there is a natural weak
equivalence of bispectra in $Sp_{\bb G}(\Re)$
   $$\Sigma_{\bb G}^\infty\Sigma^\infty rA\to\Theta^\infty_{\bb G}\bb{KG}(A,-),$$
where $\Sigma_{\bb G}^\infty\Sigma^\infty rA$ is the suspension
bispectrum of $rA$.

Let $k$ be the field of complex numbers $\bb C$ and let $\cc
K^\sigma(A,-)$ be the (0,0)-space of the bispectrum
$\Theta^\infty_{\bb G}\bb{KG}(A,-)$. We raise a question whether
there is a category $\Re$ of commutative $\bb C$-algebras such that
the fibrant simplicial set $\cc K^\sigma(\bb C,\bb C)$ has the
homotopy type of $\Omega^\infty\Sigma^\infty S^0$. The question is
justified by a recent result of Levine~\cite{Lev} saying that over
an algebraically closed field $F$ of characteristic zero the
homotopy groups of weight zero of the motivic sphere spectrum
evaluated at $F$ are isomorphic to the stable homotopy groups of the
classical sphere spectrum. The role of the motivic sphere spectrum
in our context is played by the bispectrum $\Sigma_{\bb
G}^\infty\Sigma^\infty r\bb C$.

We finish the paper by proving that the triangulated category
$DK(\Re)$ of $K$-motives is fully faithfully embedded into the
homotopy category of $(S^1,\bb G)$-bispectra. We also construct an
explicit fibrant $(S^1,\bb G)$-bispectrum $\bb{KG}^{st}(A,-)$
consisting of fibrant $S^1$-spectra $\bb K^{st}(\sigma^nA,-)$,
$n\geq 0$. For this we prove the ``Cancellation Theorem" for the
spectra $\bb K^{st}(\sigma^nA,-)$ (see Theorem~\ref{novgor}). It is
reminiscent of the Cancellation Theorem proved by
Voevodsky~\cite{Voe3} for motivic cohomology. The same theorem was
proved for $K$-theory of algebraic varieties in~\cite{GP2}.

We show in Theorem~\ref{maradona} that $\bb{KG}^{st}(A,-)$ is
$(2,1)$-periodic and represents stable algebraic Kasparov $K$-theory
(cf.~\cite[6.8-6.9]{VoeICM}). Precisely, for any algebras $A,B\in\Re$ and any integers $p,q$ there is an
isomorphism
   $$\pi_{p,q}(\bb{KG}^{st}(A,B))\cong\bb K^{st}_{p-2q}(A,B).$$
As a consequence, one has that for any algebra
$B\in\Re$ and any integers $p,q$ there is an isomorphism
   $$\pi_{p,q}(\bb{KG}^{st}(k,B))\cong KH_{p-2q}(B).$$
Thus the bispectrum $\bb{KG}^{st}(k,B)$ yields an explicit model for
homotopy $K$-theory.

We should stress that the term ``motivic" is used in the paper only
for the reason that the $k[t]$-homotopy theory of algebras shares
many properties with Morel--Voevodsky's motivic homotopy theory of
smooth schemes~\cite{MV} (see remarks on page~\pageref{remmot} as
well). If there is a likelihood of confusion with other motivic
theories of commutative or non-commutative objects, the reader can
just omit the term ``motivic" everywhere.

In general, we shall not be very explicit about set-theoretical
foundations, and we shall tacitly assume we are working in some
fixed universe $\bb U$ of sets. Members of $\bb U$ are then called
{\it small sets\/}, whereas a collection of members of $\bb U$ which
does not itself belong to $\bb U$ will be referred to as a {\it
large set\/} or a {\it proper class}. If there is no likelihood of confusion, we
replace $\otimes_k$ by $\otimes$.

\section{Preliminaries}\label{prel}

In this section we collect basic facts about admissible categories
of algebras and triangulated categories associated with them. We
mostly follow~\cite{Gar,Gar1}.

\subsection{Algebraic homotopy}
Following Gersten~\cite{G} a category $\Re$ of $k$-algebras without
unit is {\it admissible\/} if it is a full subcategory of $\ahaw$
and

\begin{enumerate}

\item if $R$ is in $\Re$ and $I$ is a (two-sided) ideal of $R$ then $I$ and
$R/I$ are in $\Re$;

\item if $R$ is in $\Re$, then so is $R[x]$, the polynomial algebra in
one variable;

\item given a cartesian square
   $$\xymatrix{D\ar[r]^\rho\ar[d]_\sigma &A\ar[d]^f\\
               B\ar[r]^g &C}$$
in $\ahaw$ with $A,B,C$ in $\Re$, then $D$ is in $\Re$.

\end{enumerate}

One may abbreviate 1, 2, and 3 by saying that $\Re$ is closed under
operations of taking ideals, homomorphic images, polynomial
extensions in a finite number of variables, and fibre products. For
instance, the {\it category of commutative $k$-algebras\/} $\CAlg_k$ is
admissible.

Observe that every $k$-module $M$ can be regarded as a non-unital
$k$-algebra with trivial multiplication: $m_1\cdot m_2=0$ for all
$m_1,m_2\in M$. Then $\Mod k$ is an admissible category of
commutative $k$-algebras.

If $R$ is an algebra then the polynomial algebra $R[x]$ admits two
homomorphisms onto $R$
   $$\xymatrix{R[x]\ar@<2.5pt>[r]^{\partial_x^0}\ar@<-2.5pt>[r]_{\partial_x^1}&R}$$
where
   $$\partial_x^i|_R=1_R,\ \ \ \partial_x^i(x)=i,\ \ \ i=0,1.$$
Of course, $\partial_x^1(x)=1$ has to be understood in the sense
that $\Sigma r_nx^n\mapsto\Sigma r_n$.

\begin{defs}{\rm
Two homomorphisms $f_0,f_1:S\to R$ are {\it elementary homo\-topic},
written $f_0\sim f_1$, if there exists a homomorphism
   $$f:S\to R[x]$$
such that $\partial^0_xf=f_0$ and $\partial^1_xf=f_1$. A map $f:S\to
R$ is called an {\it elementary homotopy equivalence\/} if there is
a map $g:R\to S$ such that $fg$ and $gf$ are elementary homotopic to
$\id_R$ and $\id_S$ respectively.

}\end{defs}

For example, let $A$ be a $\bb Z_{n\geq 0}$-graded algebra, then the
inclusion $A_0\to A$ is an elementary homotopy equivalence. The
homotopy inverse is given by the projection $A\to A_0$. Indeed, the
map $A\to A[x]$ sending a homogeneous element $a_n\in A_n$ to
$a_nx^n$ is a homotopy between the composite $A\to A_0\to A$ and the
identity $\id_A$.

The relation ``elementary homotopic'' is reflexive and
symmetric~\cite[p.~62]{G}. One may take the transitive closure of
this relation to get an equivalence relation (denoted by the symbol
``$\simeq$''). Following notation of Gersten~\cite{G1}, the set of
equivalence classes of morphisms $R\to S$ is written $[R,S]$.

\begin{lem}[Gersten \cite{G1}]
Given morphisms in $\ahaw$
   $$\xymatrix{R\ar[r]^f &S\ar@/^/[r]^g \ar@/_/[r]_{g'} &T\ar[r]^h &U}$$
such that $g\simeq g'$, then $gf\simeq g'f$ and $hg\simeq hg'$.
\end{lem}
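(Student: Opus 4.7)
The plan is to first establish the statement for the elementary relation $\sim$, and then deduce it for its transitive closure $\simeq$ by a straightforward induction along the chain of elementary homotopies connecting $g$ to $g'$.

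So assume first that $g\sim g'$, witnessed by a homomorphism $H:S\to T[x]$ with $\partial_x^0 H=g$ and $\partial_x^1 H=g'$. For the pre-composition claim, I would simply form the composite
$$Hf:R\to T[x].$$
Since $\partial_x^0$ and $\partial_x^1$ are honest homomorphisms $T[x]\to T$, one has $\partial_x^i(Hf)=(\partial_x^iH)f$ for $i=0,1$, which gives $\partial_x^0(Hf)=gf$ and $\partial_x^1(Hf)=g'f$. Thus $Hf$ is an elementary homotopy $gf\sim g'f$.

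For the post-composition claim, the idea is to use functoriality of the polynomial-ring construction. Given $h:T\to U$, let $h_*:T[x]\to U[x]$ denote the homomorphism defined on coefficients by $\sum t_n x^n\mapsto \sum h(t_n)x^n$; this is the unique extension of $h$ that fixes the variable $x$. Setting $K:=h_*\circ H:S\to U[x]$, the compatibility $\partial_x^i\circ h_*=h\circ\partial_x^i$ (check on a monomial: both sides send $tx^n$ to $i^n h(t)=h(i^n t)$) yields $\partial_x^0K=hg$ and $\partial_x^1K=hg'$, so $hg\sim hg'$.

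Finally, to pass from $\sim$ to $\simeq$, suppose $g=g_0,g_1,\dots,g_m=g'$ is a chain with each consecutive pair related by $\sim$ (reflexivity and symmetry of $\sim$ are already recorded in the excerpt). Applying the two observations above to each adjacent pair produces chains $gf=g_0f\sim g_1f\sim\cdots\sim g_m f=g'f$ and $hg=hg_0\sim\cdots\sim hg_m=hg'$, whence $gf\simeq g'f$ and $hg\simeq hg'$. The only point requiring any care is the verification that $\partial_x^i$ commutes with $h_*$; beyond this the argument is essentially formal.
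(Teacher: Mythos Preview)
Your argument is correct and is exactly the standard one: reduce to elementary homotopies, use $Hf$ for pre-composition and $h_*\circ H$ for post-composition (with the obvious check that $\partial_x^i$ commutes with $h_*$), then pass to the transitive closure. Note, however, that the paper does not supply its own proof of this lemma---it simply quotes the result from Gersten~\cite{G1}---so there is nothing in the paper to compare against beyond the citation.
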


Thus homotopy behaves well with respect to composition and we have
category $Hotalg$, the {\it homotopy category of $k$-algebras},
whose objects are $k$-algebras and such that $Hotalg(R,S)=[R,S]$.
The homotopy category of an admissible category of algebras $\Re$
will be denoted by $\cc H(\Re)$. Call a homomorphism $s:A\to B$ an
{\it $I$-weak equivalence\/} if its image in $\cc H(\Re)$ is an
isomorphism. Observe that $I$-weak equivalences are those
homomorphisms which have homotopy inverses.

A diagram
   $A\bl{f}\to B\bl{g}\to C$
in $\ahaw$ is a short exact sequence if $f$ is injective, $g$ is
surjective, and the image of $f$ is equal to the kernel of $g$.

\begin{defs}{\rm
An algebra $R$ is {\it contractible\/} if $0\sim 1$; that is, if
there is a homomorphism $f:R\to R[x]$ such that $\partial^0_xf=0$
and $\partial^1_xf=1_R$.

}\end{defs}

For example, every square zero algebra $A\in\ahaw$ is contractible
by means of the homotopy $A\to A[x]$, $a\in A\mapsto ax\in A[x]$. In
other words, every $k$-module, regarded as a $k$-algebra with trivial
multiplication, is contractible.

Following Karoubi and Villamayor~\cite{KV} we define $ER$, the {\it
path algebra\/} on $R$, as the kernel of $\partial_x^0:R[x]\to R$,
so $ER\to R[x]\bl{\partial_x^0}\to R$ is a short exact sequence in
$\ahaw$. Also $\partial_x^1:R[x]\to R$ induces a surjection
   $\partial_x^1:ER\to R$
and we define the {\it loop algebra\/} $\Omega R$ of $R$ to be its
kernel, so we have a short exact sequence in $\ahaw$
   $$\Omega R\to ER\bl{\partial_x^1}\to R.$$
We call it the {\it loop extension} of $R$. Clearly, $\Omega R$ is
the intersection of the kernels of $\partial_x^0$ and
$\partial_x^1$. By~\cite[3.3]{G} $ER$ is contractible for any
algebra $R$.

\subsection{Categories of fibrant objects}

\begin{defs}{\rm
Let $\cc A$ be a category with finite products and a final object
$e$. Assume that $\cc A$ has two distinguished classes of maps,
called {\it weak equivalences\/} and {\it fibrations}. A map is
called a {\it trivial fibration\/} if it is both a weak equivalence
and a fibration. We define a {\it path space\/} for an object $B$ to
be an object $B^I$ together with maps
   $$B\lra{s}B^I\xrightarrow{(d_0,d_1)}B\times B,$$
where $s$ is a weak equivalence, $(d_0,d_1)$ is a fibration, and the
composite is the diagonal map.

Following Brown~\cite{B}, we call $\cc A$ a {\it category of fibrant
objects\/} or a {\em Brown category\/} if the following axioms are
satisfied.

(A) Let $f$ and $g$ be maps such that $gf$ is defined. If two of
$f$, $g$, $gf$ are weak equivalences then so is the third. Any
isomorphism is a weak equivalence.

(B) The composite of two fibrations is a fibration. Any isomorphism
is a fibration.

(C) Given a diagram
   $$A\bl u\longrightarrow C\bl v\longleftarrow B,$$
with $v$ a fibration (respectively a trivial fibration), the
pullback $A\times_CB$ exists and the map $A\times_CB\to A$ is a
fibration (respectively a trivial fibration).

(D) For any object $B$ in $\cc A$ there exists at least one path
space $B^I$ (not necessarily functorial in $B$).

(E) For any object $B$ the map $B\to e$ is a fibration.

}\end{defs}

\subsection{The triangulated category $D(\Re,\ff F)$}

In what follows we denote by $\ff F$ the class of $k$-split
surjective algebra homomorphisms. We shall also refer to $\ff F$ as
{\it fibrations}.

Let $\ff W$ be a class of weak equivalences in an admissible
category of algebras $\Re$ containing homomorphisms $A\to A[t]$,
$A\in\Re$, such that the triple $(\Re,\ff F,\ff W)$ is a Brown
category.

\begin{defs}{\rm
The {\it left derived category\/} $D^-(\Re,\ff F,\ff W)$ of $\Re$
with respect to $(\ff F,\ff W)$ is the category obtained from $\Re$
by inverting the weak equivalences. }\end{defs}

By~\cite{Gar1} the family of weak equivalences in the category $\cc
H\Re$ admits a calculus of right fractions. The left derived
category $D^-(\Re,\ff F,\ff W)$ (possibly ``large") is obtained from
$\cc H\Re$ by inverting the weak equivalences. The left derived
category $D^-(\Re,\ff F,\ff W)$ is left triangulated
(see~\cite{Gar,Gar1} for details) with $\Omega$ a loop functor on
it.

There is a general method of stabilizing $\Omega$ (see
Heller~\cite{Hel}) and producing a triangulated (possibly ``large")
category $D(\Re,\ff F,\ff W)$ from the left triangulated structure
on $D^-(\Re,\ff F,\ff W)$.

An object of $D(\Re,\ff F,\ff W)$ is a pair $(A,m)$ with $A\in
D^-(\Re,\ff F,\ff W)$ and $m\in\bb Z$. If $m,n\in\bb Z$ then we
consider the directed set $I_{m,n}=\{k\in\bb Z\mid m,n\leq k\}$. The
morphisms between $(A,m)$ and $(B,n)\in D(\Re,\ff F,\ff W)$ are
defined by
   $$D(\Re,\ff F,\ff W)[(A,m),(B,n)]:=\lp_{k\in I_{m,n}}D^-(\Re,\ff F,\ff W)(\Omega^{k-m}(A),\Omega^{k-n}(B)).$$
Morphisms of $D(\Re,\ff F,\ff W)$ are composed in the obvious
fashion. We define the {\it loop\/} automorphism on $D(\Re,\ff F,\ff
W)$ by $\Omega(A,m):=(A,m-1)$. There is a natural functor
$S:D^-(\Re,\ff F,\ff W)\to D(\Re,\ff F,\ff W)$ defined by
$A\longmapsto(A,0)$.

$D(\Re,\ff F,\ff W)$ is an additive category \cite{Gar,Gar1}. We
define a triangulation $\cc Tr(\Re,\ff F,\ff W)$ of the pair
$(D(\Re,\ff F,\ff W),\Omega)$ as follows. A sequence
   $$\Omega(A,l)\to (C,n)\to(B,m)\to(A,l)$$
belongs to $\cc Tr(\Re,\ff F,\ff W)$ if there is an even integer $k$
and a left triangle of representatives
$\Omega(\Omega^{k-l}(A))\to\Omega^{k-n}(C)\to\Omega^{k-m}(B)\to\Omega^{k-l}(A)$
in $D^-(\Re,\ff F,\ff W)$. Then the functor $S$ takes left triangles
in $D^-(\Re,\ff F,\ff W)$ to triangles in $D(\Re,\ff F,\ff W)$.
By~\cite{Gar,Gar1} $\cc Tr(\Re,\ff F,\ff W)$ is a triangulation of
$D(\Re,\ff F,\ff W)$ in the classical sense of Verdier~\cite{Ver}.

By an {\it $\ff F$-extension\/} or just {\it extension\/} in $\Re$
we mean a short exact sequence of algebras
   \begin{equation*}
    (E):A\to B\bl\alpha\to C
   \end{equation*}
such that $\alpha\in\ff F$. Let $\cc E$ be the class of all $\ff
F$-extensions in $\Re$.

\begin{defs}{\rm Following Corti\~nas--Thom~\cite{CT} a {\it ($\ff
F$-)excisive homology theory\/} on $\Re$ with values in a
triangulated category $(\cc T,\Omega)$ consists of a functor
$X:\Re\to \cc T$, together with a collection $\{\partial_E:E\in\cc
E\}$ of maps $\partial_E^X=\partial_E\in{\cc T}(\Omega X(C), X(A))$.
The maps $\partial_E$ are to satisfy the following requirements. \sn
\noindent{(1)} For all $E\in \cc E$ as above,
\[
\xymatrix{\Omega
X(C)\ar[r]^{\partial_E}&X(A)\ar[r]^{X(f)}&X(B)\ar[r]^{X(g)}& X(C)}
\]
is a distinguished triangle in $\cc T$. \sn \noindent{(2)} If
\[
\xymatrix{(E): &A\ar[r]^f\ar[d]_\alpha& B\ar[r]^g\ar[d]_\beta& C\ar[d]_\gamma\\
          (E'):&A'\ar[r]^{f'}& B'\ar[r]^{g'}& C'}
\]
is a map of extensions, then the following diagram commutes
\[
\xymatrix{{\Omega} X(C)\ar[d]_{{\Omega} X(\gamma)}\ar[r]^{\partial_E}& X(A)\ar[d]^{X(\alpha)}\\
 \Omega X(C')\ar[r]_{\partial_{E'}}& X(A).}
\]
We say that the functor $X:\Re\to\cc T$ is {\it homotopy
invariant\/} if it maps homotopic homomomorphisms to equal maps, or
equivalently, if for every $A\in\ahaw$, $X$ maps the inclusion
$A\subset A[t]$ to an isomorphism.

}\end{defs}

Denote by $\ff W_{\triangle}$ the class of homomorphisms $f$ such
that $X(f)$ is an isomorphism for any excisive, homotopy invariant
homology theory $X:\Re\to\cc T$. We shall refer to the maps from
$\ff W_{\triangle}$ as {\it stable weak equivalences}. The triple
$(\Re,\ff F,\ff W_{\triangle})$ is a Brown category. In what follows
we shall write $D^-(\Re,\ff F)$ and $D(\Re,\ff F)$ to denote
$D^-(\Re,\ff F,\ff W_{\triangle})$ and $D(\Re,\ff F,\ff
W_{\triangle})$ respectively, dropping $\ff W_{\triangle}$ from the
notation.

By~\cite{Gar1} the canonical functor
   $$\Re\to D(\Re,\ff F)$$
is the universal excisive, homotopy invariant homology theory on
$\Re$.

\section{Homotopy theory of algebras}\label{homotopy}

Let $\Re$ be a {\it small\/} admissible category of algebras. We
shall work with various model category structures for the category
of simplicial functors on $\Re$. We mostly adhere
to~\cite{Gar,Gark}.

\subsection{The categories of pointed simplicial functors $U_\bullet\Re$}

Throughout this paper we work with a model category $U_\bullet\Re$.
To define it, we first enrich $\Re$ over pointed simplicial sets
$\bb S_\bullet$. Given an algebra $A\in\Re$, denote by $rA$ the
representable functor $\Hom_{\Re}(A,-)$. Let $\Re_\bullet$ have the
same objects as $\Re$ and have pointed simplicial sets of morphisms
being the $rA(B)=\Hom_{\Re}(A,B)$ pointed at zero. Denote by
$U_\bullet\Re$ the category of $\bb S_\bullet$-enriched functors
from $\Re_\bullet$ to $\bb S_\bullet$. One easily checks that
$U_\bullet\Re$ can be regarded as the category of covariant pointed
simplicial functors $X:\Re\to\bb S_\bullet$ such that $X(0)=*$.

By~\cite[4.2]{DRO} we define the projective model structure on
$U_\bullet\Re$. This is a proper, simplicial, cellular model
category with weak equivalences and fibrations being defined
objectwise, and cofibrations being those maps having the left
lifting property with respect to trivial fibrations.

The class of projective cofibrations for $U_\bullet\Re$ is generated
by the set
$$
I_{U_\bullet\Re}=\{rA\wedge(\partial\Delta^{n}\subset\Delta^{n})_+\}^{n\geq
0}$$ indexed by $A\in\Re$. Likewise, the class of acyclic projective
cofibrations is generated by
$$
J_{U_\bullet\Re}=\{rA\wedge(\Lambda^{k}_n\subset\Delta^{n})_+\}^{n>0}_{0\leq
k\leq n}.
$$

Given $\cc X,\cc Y\in U_\bullet\Re$ the pointed function complex
$\Map_\bullet(\cc X,\cc Y)$ is defined as
   $$\Map_\bullet(\cc X,\cc Y)_n=\Hom_{U_\bullet\Re}(\cc X\wedge\Delta^n_+,\cc Y),\quad n\geq 0.$$
By~\cite[2.1]{DRO} there is a natural isomorphism of pointed
simplicial sets
   $$\Map_\bullet(rA,\cc X)\cong\cc X(A)$$
for all $A\in\Re$ and $\cc X\in U_\bullet\Re$.

Recall that the model category $U\Re$ of functors from $\Re$ to
unpointed simplicial sets $\bb S$ is defined in a similar fashion
(see~\cite{Gar}). Since we mostly work with spectra in this paper,
the category of spectra associated with $U_\bullet\Re$ is
technically more convenient than the category of spectra associated
with $U\Re$.

\subsection{The model categories $U_\bullet\Re_I,U_\bullet\Re_J,U_\bullet\Re_{I,J}$}

Let $I=\{i=i_A:r(A[t])\to r(A)\mid A\in\Re\}$, where each $i_A$ is
induced by the natural homomorphism $i:A\to A[t]$. Recall that a
functor $F:\Re\to\bb S_\bullet/Spectra$ is {\it homotopy
invariant\/} if $F(A)\to F(A[t])$ is a weak equivalence for all
$A\in\Re$. Consider the projective model structure on
$U_\bullet\Re$. We shall refer to the $I$-local equivalences as
(projective) $I$-weak equivalences. Denote by $U_\bullet\Re_I$ the
model category obtained from $U_\bullet\Re$ by Bousfield
localization with respect to the family $I$. Notice that any
objectwise fibrant homotopy invariant functor $F\in U_\bullet\Re$ is
an $I$-local object, hence fibrant in $U_\bullet\Re_I$.

Let us introduce the class of excisive functors on $\Re$. They look
like flasque presheaves on a site defined by a cd-structure in the
sense of Voevodsky~\cite[section 3]{V}.

\begin{defs}{\rm
A simplicial functor $\cc X\in U_\bullet\Re$ is called {\it
excisive\/} with respect to $\ff F$ if for any cartesian square in
$\Re$
   $$\xymatrix{
      D\ar[r]\ar[d]&A\ar[d]\\
      B\ar[r]^f&C
     }$$
with $f$ a fibration (call such squares {\it distinguished}) the
square of simplicial sets
   $$\xymatrix{
      \cc X(D)\ar[r]\ar[d]&{\cc X(A)}\ar[d]\\
      {\cc X(B)}\ar[r]&\cc X(C)
     }$$
is a homotopy pullback square. It immediately follows from the
definition that every excisive object takes $\ff F$-extensions in
$\Re$ to homotopy fibre sequences of simplicial sets.

}\end{defs}

Let $\alpha$ denote a distinguished square in $\Re$ as shown.
   $$\xymatrix{
      D\ar[r]\ar[d]&A\ar[d]\\
      B\ar[r]&C}$$
Let us apply the simplicial mapping cylinder construction $\cyl$ to
$\alpha$ and form the pushouts:
   $$\xymatrix{rC\ar[r]\ar[d]&\cyl(rC\to rA)\ar[d]\ar[r]&rA\ar[d]\\
               rB\ar[r]&\cyl(rC\to rA)\sqcup_{rC}rB\ar[r]&rD}$$
Note that $rC\to\cyl(rC\rightarrow rA)$ is a projective cofibration
between (projective) cofibrant objects of $U_\bullet\Re$. Thus
$s(\alpha)=\cyl(rC\to rA)\sqcup_{rC}rB$ is (projective)
cofibrant \cite[1.11.1]{Hov}. For the same reasons, applying the
simplicial mapping cylinder to $s(\alpha)\rightarrow rD$ and setting
$t(\alpha)=\cyl\bigl(s(\alpha)\to rD\bigr)$ we get a projective
cofibration
   $$\xymatrix{\cyl(\alpha)\colon s(\alpha)\ar[r] & t(\alpha).}$$

Let $J^{\cyl(\alpha)}_{U_\bullet\Re}$ consists of all pushout
product maps
$$
\xymatrix{
s(\alpha)\wedge\Delta^n_+\sqcup_{s(\alpha)\wedge\partial\Delta^n_+}
t(\alpha)\wedge\partial\Delta^n_+\ar[r] & t(\alpha)\wedge\Delta^n_+}
$$
and let $J= J_{U_\bullet\Re}\cup J^{\cyl(\alpha)}_{U_\bullet\Re}$.
Denote by $U_\bullet\Re_J$ the model category obtained from
$U_\bullet\Re$ by Bousfield localization with respect to the family
$J$. It is directly verified that $\cc X\in U_\bullet\Re$ is
$J$-local if and only if it has the right lifting property with
respect to $J$. Also, $\cc X$ is $J$-local if and only if it is
objectwise fibrant and excisive~\cite[4.3]{Gar}.

Finally, let us introduce the model category $U_\bullet\Re_{I,J}$.
It is, by definition, the Bousfield localization of $U_\bullet\Re$
with respect to $I\cup J$. The weak equivalences (trivial
cofibrations) of $U_\bullet\Re_{I,J}$ will be referred to as
(projective) $(I,J)$-weak equivalences ((projective) $(I,J)$-trivial
cofibrations). By~\cite[4.5]{Gar} a functor $\cc X\in U_\bullet\Re$
is $(I,J)$-local if and only if it is objectwise fibrant, homotopy
invariant and excisive.

\begin{rem}\label{remmot}{\rm
The model category $U_\bullet\Re_{I,J}$ can also be regarded as a
kind of unstable motivic model category associated with $\Re$.
Indeed, the construction of $U_\bullet\Re_{I,J}$ is similar to
Morel--Voevodsky's unstable motivic theory for smooth schemes $Sm/F$
over a field $F$~\cite{MV}. If we replace $I$ by
   $$I'=\{X\times\bb A^1\lra{pr}X\mid X\in Sm/F\}$$
and the family of distinguished squares by the family of elementary
Nisnevich squares and get the corresponding family $J'$ associated
to it, then one of the equivalent models for Morel--Voevodsky's
unstable motivic theory is obtained by Bousfield localization of
simplicial pre\-sheaves with respect to $I'\cup J'$.

For this reason, $U_\bullet\Re_{I,J}$ can also be called the
category of (pointed) {\it motivic\/} spaces, where each algebra $A$
is identified with the pointed motivic space $rA$. One can also
refer to $(I,J)$-weak equivalences as {\it motivic weak
equivalences}.

}\end{rem}

\subsection{Monoidal structure on $U_\bullet\Re$}

In this section we mostly follow~\cite[section~2.1]{O}. Suppose
$\Re$ is tensor closed, that is $k\in \Re$ and $A\otimes B\in\Re$
for all $A,B\in\Re$. We introduce the monoidal product $\cc
X\otimes\cc Y$ of $\cc X$ and $\cc Y$ in $U_\bullet\Re$ by the
formulas
\begin{equation*}
\cc X\otimes\cc Y(A)= \underset{A_1\otimes A_2\rightarrow
A}{\colim} \cc X(A_1)\wedge\cc Y(A_2).
\end{equation*}
The colimit is indexed on the category with objects $\alpha\colon
A_1\otimes A_2\rightarrow A$ and maps the pairs of maps
$(\phi,\psi)\colon (A_1,A_2)\rightarrow (A'_1,A'_2)$ such that
$\alpha'(\psi\otimes\phi)=\alpha$. By functoriality of colimits it
follows that $\cc X\otimes\cc Y$ is in $U_\bullet\Re$.

The tensor product can also be defined by the formula
\begin{equation*}
\cc X\otimes\cc Y(A)= \int^{A_1,A_2\in\Re} \bigl(\cc X(A_1)\wedge\cc
Y(A_2)\bigr) \wedge\Hom_{\Re}(A_1\otimes A_2,A).
\end{equation*}
This formula is obtained from a theorem of Day~\cite{Day}, which
also asserts that the triple $(U_\bullet\Re,\otimes,r(k))$ forms a
closed symmetric monoidal category.

The internal Hom functor, right adjoint to $\cc X\otimes-$, is given
by
\begin{equation*}
\underline{\Hom}(\cc X,\cc Y)(A)= \int_{B\in\Re}\Map_\bullet(\cc
X(B),\cc Y(A\otimes B)),
\end{equation*}
where $\Map_\bullet$ stands for the function complex in $\bb
S_\bullet$.

So there exist natural isomorphisms
\begin{equation*}
\underline{\Hom}(\cc X\otimes\cc Y,\cc
Z)\cong\underline{\Hom}\bigl(\cc X,\underline{\Hom}(\cc Y,\cc
Z)\bigr)
\end{equation*}
and
\begin{equation*}
\underline{\Hom}(r(k),\cc Z)\cong\cc Z.
\end{equation*}

Concerning smash products of representable functors, one has a
natural isomorphism
\begin{equation*}
rA\otimes rB\cong r(A\otimes B),\quad A,B\in\Re.
\end{equation*}
Note as well that, for pointed simplicial sets $K$ and $L$, one has
$K\otimes L=K\wedge L$.

We recall a pointed simplicial set tensor and cotensor structure on
$U_\bullet\Re$. If $\cc X$ and $\cc Y$ are in $U_\bullet\Re$ and $K$
is a pointed simplicial set, the tensor $\cc X\otimes K$ is given by
\begin{equation*}
\cc X\otimes K(A)= \cc X(A)\wedge K
\end{equation*}
and the cotensor $\cc Y^K$ is given in terms of the ordinary
function complex:
\begin{equation*}
\cc Y^K(A)={\Map_\bullet}\bigl(K,\cc Y(A)\bigr).
\end{equation*}

The function complex $\Map_\bullet(\cc X,\cc Y)$ of $\cc X$ and $\cc
Y$ is defined by setting
\begin{equation*}
\Map_\bullet(\cc X,\cc Y)_n=\Hom_{U_\bullet\Re}(\cc
X\otimes\Delta^n_+,\cc Y).
\end{equation*}
By the Yoneda lemma there exists a natural isomorphism of pointed
simplicial sets
\begin{equation*}
\Map_\bullet(rA,\cc Y)\cong\cc Y(A).
\end{equation*}
Using these definitions $U_\bullet\Re$ is enriched in pointed
simplicial sets $\bb S_\bullet$. Moreover, there are natural
isomorphisms of pointed simplicial sets
\begin{equation*}
\Map_\bullet(\cc X\otimes K,\cc
Y)\cong\Map_\bullet\bigl(K,\Map_\bullet(\cc X,\cc
Y)\bigr)\cong\Map_\bullet(\cc X,\cc Y^K).
\end{equation*}
It is also useful to note that
\begin{equation*}
\underline{\Hom}(\cc X,\cc Y)(A)=\Map_\bullet\bigl(\cc X,\cc
Y(A\otimes-)\bigr).
\end{equation*}
and
\begin{equation*}
\underline{\Hom}(rB,\cc Y)=\cc Y(-\otimes B).
\end{equation*}

It can be shown similarly to~\cite[3.10; 3.43; 3.89]{O} that the
model categories $U_\bullet\Re$, $U_\bullet\Re_I$, $U_\bullet\Re_J$,
$U_\bullet\Re_{I,J}$ are monoidal.

\section{Unstable algebraic Kasparov $K$-theory}

Let $\cc U$ be an arbitrary category and let $\Re$ be an admissible
category of $k$-algebras. Suppose that there are functors
$F:\Re\to\cc U$ and $\wt T:\cc U\to\Re$ such that $\wt T$ is left
adjoint to $F$. We denote $\wt TFA$, for $ A\in\Re$, by $TA$ and the
counit map $\wt TFA\to A$ by $\eta_A$. If $X\in\Ob\cc U$ then the
unit map $X\to F\wt TX$ is denoted by $i_X$. We note that the
composition
   $$FA\xrightarrow{i_{FA}}F\wt TFA\xrightarrow{F\eta_A}FA$$
equals $1_{FA}$ for every $A\in\Re$, and hence $F\eta_A$ splits in
$\cc U$. We call an admissible category of $k$-algebras {\it
$T$-closed\/} if $TA\in\Re$ for all $A\in\Re$.

\begin{lem}\label{ta}
Suppose $\cc U$ is either a full subcategory of the category of sets
or a full subcategory of the category of $k$-modules. Suppose as
well that $F:\Re\to\cc U$ is the forgetful functor. Then for every
$A\in\Re$ the algebra $TA$ is contractible, i.e. there is a
contraction $\tau:TA\to TA[x]$ such that
$\partial_x^0\tau=0,\partial_x^1\tau=1$. Moreover, the contraction
is functorial in $A$.
\end{lem}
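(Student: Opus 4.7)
The plan is to construct $\tau$ by a purely formal adjunction argument that never uses an explicit description of $\wt T$. The key observation is that for each $B\in\Re$ the ``multiplication by $x$'' map $B\to B[x]$, $b\mapsto b\cdot x$, is $k$-linear, and hence is a morphism of $\cc U$ in both cases (since $\cc U$ is \emph{full} in the ambient category of sets or of $k$-modules). First I would form the composite in $\cc U$
$$
\phi_A\colon FA\xrightarrow{\ i_{FA}\ }F(TA)\xrightarrow{\ \cdot\, x\ }F(TA[x]),
$$
which makes sense because $TA\in\Re$ forces $TA[x]\in\Re$ by admissibility of $\Re$. The adjunction $\wt T\dashv F$ then produces a unique algebra homomorphism $\tau=\tau_A\colon TA\to TA[x]$ characterised by $F(\tau_A)\circ i_{FA}=\phi_A$.

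Next I would verify that $\tau_A$ is a contraction by invoking the uniqueness half of the adjunction. Composing $\tau_A$ with $\partial_x^0\colon TA[x]\to TA$ and restricting along $i_{FA}$ produces the morphism $a\mapsto \partial_x^0(i_{FA}(a)\cdot x)=0$; the zero homomorphism $TA\to TA$ in $\Re$ gives the same composite after applying $F$ and restricting, so uniqueness forces $\partial_x^0\tau_A=0$. Exactly the same argument with $\partial_x^1$ yields the map $a\mapsto i_{FA}(a)\cdot 1=i_{FA}(a)$, which is also realised by $1_{TA}$, hence $\partial_x^1\tau_A=1_{TA}$. For naturality in $A$, given $f\colon A\to A'$ in $\Re$, the two composites $\tau_{A'}\circ Tf$ and $(Tf)[x]\circ\tau_A$ both carry $i_{FA}(a)$ to $i_{FA'}(Ff(a))\cdot x$ (using naturality of the unit $i$), so they coincide by the same uniqueness principle applied to $\Hom_\Re(TA,TA'[x])\cong\Hom_{\cc U}(FA,F(TA'[x]))$.

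The only delicate point, and in fact the only place where the hypothesis on $\cc U$ is used, is the assertion that ``multiplication by $x$'' counts as a morphism of $\cc U$: this requires precisely that $\cc U$ be a full subcategory of Sets or of $k$-Mod. In all other respects the proof is a direct exercise in adjunctions, and an explicit formula for $\wt T$ (e.g.\ the free non-unital $k$-algebra functor in the Sets case, or the reduced tensor algebra $\bigoplus_{n\geq 1}M^{\otimes n}$ in the $k$-Mod case) is never needed---although either formula also makes $TA$ visibly $\bb Z_{\geq 1}$-graded and recovers $\tau_A$ as the standard contraction $b_n\mapsto b_nx^n$ used in the paper.
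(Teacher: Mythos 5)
Your proposal is correct and follows essentially the same route as the paper: both define $\tau$ via the adjunction from the morphism $u\circ i_{FA}$, where $u$ is multiplication by $x$ on the underlying object, and invoke the fullness of $\cc U$ to see that $u$ is a $\cc U$-morphism. You supply the details of the uniqueness argument for $\partial_x^0\tau=0$, $\partial_x^1\tau=1$ and naturality that the paper dispatches with ``elementary properties of adjoint functors,'' and these details are correct.
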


\begin{proof}
Consider a map $u:FTA\to FTA[x]$ sending an element $b\in FTA$ to
$bx\in FTA[x]$. By assumption, $u$ is a morphisms of $\cc U$. The
desired contraction $\tau$ is uniquely determined by the map $u\circ
i_{FA}:FA\to FTA[x]$. By using elementary properties of adjoint
functors, one can show that $\partial_x^0\tau=0$ and
$\partial_x^1\tau=1$.
\end{proof}

Throughout this paper, whenever we deal with a $T$-closed admissible
category of $k$-algebras $\Re$ we assume to be fixed an underlying
category $\cc U$, which is a full subcategory of $\Mod k$.

\begin{exs}{\rm
(1) Let $\Re=\ahaw$. Given an algebra $A$, consider the algebraic
tensor algebra
   $$TA=A\oplus A\otimes A\oplus A^{\otimes^3}\oplus\cdots$$
with the usual product given by concatenation of tensors. In Cuntz's
treatment of bivariant $K$-theory~\cite{Cu2,Cu,Cu1}, tensor algebras
play a prominent role.

There is a canonical $k$-linear map $A\to TA$ mapping $A$ into the
first direct summand. Every $k$-linear map $s:A\to B$ into an
algebra $B$ induces a homomorphism $\gamma_s:TA\to B$ defined by
   $$\gamma_s(x_1\otimes\cdots\otimes x_n)=s(x_1)s(x_2)\cdots s(x_n).$$
Plainly $\Re$ is $T$-closed.

(2) If $\Re=\cahaw$ then
   $$T(A)=Sym(A)=\oplus_{n\geq 1}S^nA,\quad S^nA=A^{\otimes n}/\langle a_1\otimes\cdots\otimes a_n
     -a_{\sigma(1)}\otimes\cdots\otimes a_{\sigma(n)}\rangle,\quad\sigma\in\Sigma_n,$$
the symmetric algebra of $A$, and $\Re$ is $T$-closed.

}\end{exs}

We have a natural extension of algebras
   $$0\lra{}JA\lra{\iota_A}TA\lra{\eta_A}A\lra{}0.$$
Here $JA$ is defined as $\kr\eta_A$\label{jjjj}. Clearly, $JA$ is
functorial in $A$.

Given a small $T$-closed admissible category of $k$-algebras $\Re$,
we denote by $Sp(\Re)$ the category of $S^1$-spectra in the sense of
Hovey~\cite{H} associated with the model category
$U_\bullet\Re_{I,J}$. Recall that a spectrum consists of sequences
$\cc E=(\cc E_{n})_{n\geq 0}$ of pointed simplicial functors in
$U_\bullet\Re$ equipped with structure maps $\sigma_{n}^{\cc
E}:\Sigma\cc E_{n}\rightarrow\cc E_{n+1}$ where $\Sigma=-\wedge S^1$
is the suspension functor. A map $f:\cc E\rightarrow\cc F$ of
spectra consists of compatible maps $f_{n}:\cc E_{n}\to\cc F_{n}$ in
the sense that the diagrams
$$
\xymatrix{
\Sigma\cc E_{n}\ar[d]_-{\Sigma f_{n}}\ar[r]^-{\sigma_{n}^{\cc E}} & \cc E_{n+1}\ar[d]^-{f_{n+1}} \\
\Sigma\cc F_{n}\ar[r]^-{\sigma_{n}^{\cc F}} & \cc F_{n+1} }
$$
commute for all $n\geq 0$. The category $Sp(\Re)$ is endowed with
the stable model structure (see~\cite{H} for details).

Given an algebra $A\in\Re$, we denote by $\Sigma^\infty rA$ the
suspension spectrum associated with the functor $rA$ pointed at
zero. By definition, $(\Sigma^\infty rA)_n=rA\wedge S^n$ with
obvious structure maps.

In order to define one of the main spectra of the paper $\cc R(A)$
associated to an algebra $A\in\Re$, we have to recall some
definitions from~\cite{Gark}.

For any $B\in\Re$ we define a simplicial algebra
   $$B^{\Delta}:[n]\mapsto B^{{\Delta}^n}:=B[t_0,\dots, t_n]/\langle 1-\sum_i t_i\rangle\ \ \ (\cong
     B[t_1,\ldots,t_n]).$$
Given a map of posets $\alpha:[m]\to[n]$, the map
$\alpha^*:B^{\Delta^n}\to B^{\Delta^m}$ is defined by
$\alpha^*(t_j)=\sum_{\alpha(i)=j}t_i$.
We have that $B^\Delta\cong B\otimes k^\Delta$ and $B^\Delta$ is
pointed at zero.

For any pointed simplicial set $X\in\bb S_\bullet$, we denote by
$B^\Delta(X)$ the simplicial algebra $\Map_\bullet(X,B^\Delta)$. The
simplicial algebra associated to any unpointed simplicial set and
any simplicial algebra is defined in a similar way. By $\bb
B^\Delta(X)$ we shall mean the pointed simplicial ind-algebra
   $$B^\Delta(X)\to B^\Delta(\sd^1X)\to B^\Delta(\sd^2X)\to\cdots$$
In particular, one defines the ``path space" simplicial ind-algebra
$P\bb B^\Delta$. We shall also write $\bb B^\Delta(\Omega^n)$ to
denote $\bb B^\Delta(S^n)$, where $S^n=S^1\wedge\cdots\wedge S^1$ is
the simplicial $n$-sphere. For any $A\in\Re$ we denote by
$\Hom_{\inda}(A,\bb B^\Delta(\Omega^n))$ the colimit of the sequence
in $\bb S_\bullet$
   $$\Hom_{\ahaw}(A,B^\Delta(S^n))\to\Hom_{\ahaw}(A,B^\Delta(\sd^1S^n))\to\Hom_{\ahaw}(A,B^\Delta(\sd^2S^n))\to\cdots$$

The natural simplicial map $d_1:P\bb B^\Delta(\Omega^n)\to\bb
B^\Delta(\Omega^n)$ has a natural $k$-linear splitting described
below. Let $\mathbf{t}\in
P\Bbbk^\Delta(\Delta^1\times\bl{n}\cdots\times\Delta^1)_0$\label{elt}
stand for the composite map
   $$\sd^m(\Delta^1\times\bl{n+1}\cdots\times\Delta^1)\lra{pr}\sd^m\Delta^1\to\Delta^1\bl t\to k^\Delta,$$
where $pr$ is the projection onto the $(n+1)$th direct factor
$\Delta^1$ and $t=t_0\in k^{\Delta^1}$. The element $\mathbf{t}$ can
be regarded as a 1-simplex of the unital ind-algebra
$\Bbbk^\Delta(\Delta^1\times\bl{n}\cdots\times\Delta^1)$ such that
$\partial_0(\mathbf{t})=0$ and $\partial_1(\mathbf{t})=1$. Let
$\imath:\bb B^\Delta(\Omega^n)\to(\bb
B^\Delta(\Omega^n))^{\Delta^1}$ be the natural inclusion.
Multiplication with $\mathbf{t}$ determines a $k$-linear map $(\bb
B^\Delta(\Omega^n))^{\Delta^1}\lra{\mathbf{t}\cdot}P\bb
B^\Delta(\Omega^n)$. Now the desired $k$-linear splitting $\bb
B^\Delta(\Omega^{n})\lra{\upsilon}P\bb B^\Delta(\Omega^{n})$ of
simplicial ind-modules is defined as
   $$\upsilon:=\mathbf{t}\cdot\imath.$$

If we consider $\bb B^\Delta(\Omega^n)$ as a $(\bb Z_{\geq
0}\times\Delta)$-diagram in $\Re$, then there is a commutative
diagram of extensions for $(\bb Z_{\geq 0}\times\Delta)$-diagrams
   $$\xymatrix{J\bb B^\Delta(\Omega^n)\ar[d]_{\xi_\upsilon}\ar[r]&T\bb B^\Delta(\Omega^n)\ar[r]\ar[d]&\bb B^\Delta(\Omega^n)\ar@{=}[d]\\
               \bb B^\Delta(\Omega^{n+1})\ar[r]&P\bb B^\Delta(\Omega^n)\ar[r]^{d_1}&\bb B^\Delta(\Omega^n),}$$
where the map $\xi_{\upsilon}$ is uniquely determined by the
$k$-linear splitting $\upsilon$. For every element
$f\in\Hom_{\inda}(J^nA,\bb B^\Delta(\Omega^n))$ one sets:
   $$\varsigma(f):=\xi_{\upsilon}\circ J(f)\in\Hom_{\inda}(J^{n+1}A,\bb B^\Delta(\Omega^{n+1})).$$

The spectrum $\cc R(A)$ is defined at every $B\in\Re$ as the
sequence of spaces pointed at zero
   $$\Hom_{\inda}(A,\bb B^\Delta),\Hom_{\inda}(JA,\bb B^\Delta),\Hom_{\inda}(J^2A,\bb B^\Delta),\ldots$$
By~\cite[section~2]{Gark} each $\cc R(A)_n(B)$ is a fibrant
simplicial set and
   $$\Omega^k\cc R(A)_0(B)=\Hom_{\inda}(A,\bb B^\Delta(\Omega^k)).$$
Each structure map $\sigma_n:\cc R(A)_n\wedge S^1\to\cc R(A)_{n+1}$
is defined at $B$ as adjoint to the map
$\varsigma:\Hom_{\inda}(J^nA,\bb
B^\Delta)\to\Hom_{\inda}(J^{n+1}A,\bb B^\Delta(\Omega))$.

For every $A\in\Re$ there is a natural map in $Sp(\Re)$
   $$i:\Sigma^\infty rA\to\cc R(A)$$
functorial in $A$.

\begin{defs}{\rm (see~\cite{Gark})
(1) Given two $k$-algebras $A,B\in\Re$, the {\it unstable algebraic
Kasparov $K$-theory space\/} $\mathcal{K}(A,B)$
is the fibrant space
   $$\lp_n\Hom_{\inda}(J^nA,\bb B^\Delta(\Omega^n)),$$
where the colimit maps are given by $\xi_\upsilon$-s and $JA$ is as
defined on page~\pageref{jjjj}. Its homotopy groups will be denoted
by $\mathcal{K}_n(A,B)$, $n\geq 0$. The simplicial functor
$\mathcal{K}(A,-)$ is fibrant in $U_\bullet(\Re)_{I,J}$
by~\cite[section~4]{Gark}. Also, there is a natural isomorphism of
simplicial sets
   $$\cc K(A,B)\cong\Omega\cc K(JA,B).$$
In particular, $\cc K(A,B)$ is an infinite loop space with $\cc
K(A,B)$ which simplicially isomorphic to $\Omega^n\cc K(J^nA,B)$
(see~\cite[5.1]{Gark}).

(2) The {\it unstable algebraic Kasparov $KK$-theory spectrum\/} of
$(A,B)$ consists of the sequence of spaces
   $$\mathcal{K}(A,B),\mathcal{K}(JA,B),\mathcal{K}(J^2A,B),\ldots$$
together with the natural isomorphisms
$\mathcal{K}(J^nA,B)\cong\Omega\mathcal{K}(J^{n+1}A,B)$. It forms an
$\Omega$-spectrum which we also denote by $\mathbb{K}(A,B)$. Its
homotopy groups will be denoted by $\mathbb{K}_n(A,B)$, $n\in\bb Z$.
Observe that $\mathbb{K}_n(A,B)\cong\mathcal{K}_n(A,B)$ for any
$n\geq 0$ and $\mathbb{K}_n(A,B)\cong\mathcal{K}_0(J^{-n}A,B)$ for
any $n<0$.

}\end{defs}

There is a natural map of spectra
   $$j:\cc R(A)\to\mathbb{K}(A,-).$$
By~\cite[section~6]{Gark} this is a stable equivalence and
$\mathbb{K}(A,-)$ is a fibrant object of $Sp(\Re)$. In fact for any
algebra $B\in\Re$ the map
   $$j:\cc R(A)(B)\to\mathbb{K}(A,B)$$
is a stable equivalence of ordinary spectra.

The following theorem is crucial in our analysis. It states that
$\bb K(A,-)$ is a fibrant replacement of $\Sigma^\infty rA$ in
$Sp(\Re)$.

\begin{thm}\label{mainres}
Given $A\in\Re$ the map $i:\Sigma^\infty rA\to\cc R(A)$ is a level
$(I,J)$-weak equivalence, and therefore the composite map
   $$\Sigma^\infty rA\bl i\to\cc R(A)\bl j\to\bb K(A,-)$$
is a stable equivalence in $Sp(\Re)$, functorial in $A$.
\end{thm}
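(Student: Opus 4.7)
The plan is first to establish that the level-$n$ map $i_n:rA\wedge S^n\to\cc R(A)_n$ is an $(I,J)$-weak equivalence for every $n\geq 0$, and then to deduce the stable equivalence of the composite $\Sigma^\infty rA\to\bb K(A,-)$ by post-composing with the stable equivalence $j:\cc R(A)\to\bb K(A,-)$ already established in \cite[Section 6]{Gark}. The heart of the matter is the case $n=0$; higher levels will then follow by induction once the structure maps $\sigma^{\cc R(A)}_n:\Sigma\cc R(A)_n\to\cc R(A)_{n+1}$ are known to be $(I,J)$-equivalences, because the structure maps of the suspension spectrum $\Sigma^\infty rA$ are identities and compatibility forces $i_{n+1}=\sigma^{\cc R(A)}_n\circ\Sigma i_n$.

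For $n=0$, I would show that the functor $\cc R(A)_0:B\mapsto\Hom_{\inda}(A,\bb B^\Delta)$ is $(I,J)$-local and that $i_0$ realises it as an $(I,J)$-fibrant replacement of $rA$. Objectwise fibrancy is recorded in the text. Homotopy invariance in $B$ follows because the inclusion $B\hookrightarrow B[t]$ is an elementary homotopy equivalence of algebras (with inverse $t\mapsto 0$), which is sent to a simplicial homotopy equivalence by $\Hom_{\inda}(A,\bb(-)^\Delta)$. The nontrivial ingredient is excision along $\ff F$-extensions: to each distinguished square in $\Re$ one must associate a homotopy pullback of simplicial sets after applying $\Hom_{\inda}(A,\bb(-)^\Delta)$; this is a lifting argument using the $k$-linear section $\upsilon$ to push algebra homomorphisms across path extensions, in the spirit of the analogous arguments in \cite{Gark}. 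Once the three properties are secured, one checks that for every $(I,J)$-local $\cc F$ the map $\Map_\bullet(\cc R(A)_0,\cc F)\to\Map_\bullet(rA,\cc F)=\cc F(A)$ induced by $i_0$ is a weak equivalence, thereby identifying $i_0$ with an $(I,J)$-fibrant replacement of $rA$.

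The inductive step exploits the commutative diagram of extensions displayed before the definition of $\varsigma$: applying $\Hom_{\inda}(J^nA,-)$ to the short exact sequence $J\bb B^\Delta(\Omega^n)\to T\bb B^\Delta(\Omega^n)\to\bb B^\Delta(\Omega^n)$ yields a homotopy fibre sequence because the middle term is contractible (Lemma \ref{ta}), and similarly for the loop extension $\bb B^\Delta(\Omega^{n+1})\to P\bb B^\Delta(\Omega^n)\to\bb B^\Delta(\Omega^n)$, whose middle term is contractible by construction. The comparison $\xi_\upsilon$ between the two fibre sequences is precisely the delooping $\cc R(A)_n\to\Omega\cc R(A)_{n+1}$; it is an $(I,J)$-weak equivalence, so by adjunction so is $\sigma^{\cc R(A)}_n$. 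Composing with the $(I,J)$-equivalence $\Sigma i_n$ supplied by the inductive hypothesis gives the required equivalence for $i_{n+1}$. The final stable equivalence statement is then immediate from the level $(I,J)$-weak equivalence of $i$ and the fact that $j$ is a stable equivalence.

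The main obstacle I foresee is the excisiveness of $\cc R(A)_0$: to turn an algebraic pullback square into a homotopy pullback of simplicial sets one has to exploit the ind-structure together with the $k$-linear splitting $\upsilon$, and this is where the proof ceases to be formal. Everything else is comparatively routine once this is in place: the contractibility of $T$ and $P$ delivers the delooping, the level-to-level propagation is a formal adjunction argument, and the final passage to a stable equivalence uses only that $j$ is already known to be stable.
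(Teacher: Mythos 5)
Your global strategy --- prove a levelwise $(I,J)$-weak equivalence, then compose with the stable equivalence $j$ --- matches the paper's, but both of the main ingredients you propose for the level statement have real gaps.

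For level $0$ you assert that $\cc R(A)_0=Ex^\infty\circ Sing(rA)$ is $(I,J)$-local, and in particular excisive. This is not established anywhere and is essentially false: excision in \cite{Gark} is obtained only after stabilizing in the $J$-direction, for $\cc K(A,-)=\colim_n\Hom_{\inda}(J^nA,\bb B^\Delta(\Omega^n))$, not at any fixed level. (Already for $A=k$ one has $\Hom_{\ahaw}(k,B)\cong\{e\in B:e^2=e\}$, and the failure of excision for idempotents is the standard reason one must stabilize at all.) Fortunately the claim is also unnecessary: the paper only shows $i_0$ is an $I$-weak equivalence, by \cite[3.8]{Gar}, and since $U_\bullet\Re_{I,J}$ is a further Bousfield localization of $U_\bullet\Re_I$, every $I$-weak equivalence is automatically an $(I,J)$-weak equivalence. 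No local-ness of the target is needed.

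The inductive step has a deeper problem. You want to show $\cc R(A)_n\to\Omega\cc R(A)_{n+1}$ is an equivalence by comparing the fibre sequences obtained from $J\bb B^\Delta(\Omega^n)\to T\bb B^\Delta(\Omega^n)\to\bb B^\Delta(\Omega^n)$ and $\bb B^\Delta(\Omega^{n+1})\to P\bb B^\Delta(\Omega^n)\to\bb B^\Delta(\Omega^n)$. But first, to turn these algebra extensions into homotopy fibre sequences of simplicial sets after applying $\Hom_{\inda}(J^nA,-)$ you need precisely the excision of the fixed-level functor that is unavailable --- contractibility of the middle terms alone does not give a fibre sequence. Second, and more seriously, even granting the fibre sequences, the map induced on fibres by $\xi_\upsilon$ is \emph{not} the adjoint structure map. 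That map is $\varsigma(f)=\xi_\upsilon\circ J(f)$: one first applies the functor $J$ to $f\colon J^nA\to\bb B^\Delta(\Omega^n)$, producing a map out of $J^{n+1}A$, and only then post-composes with $\xi_\upsilon$. Your argument silently drops the $J$-shift, which is exactly where the content lies. The paper's actual proof of the key step is different and concrete: it builds a cofibrant $\cc X$ as the pushout of $rA\wedge(I,0)\leftarrow rA\xrightarrow{\eta_A^*}r(TA)$, uses Lemma~\ref{ta} to see $r(TA)\simeq 0$ in $U_\bullet\Re_I$ so that $\alpha\colon\cc X\to rA\wedge S^1$ is an $I$-weak equivalence, shows $\sigma\colon\cc X\to r(JA)$ is an $(I,J)$-weak equivalence by comparing two homotopy pushout squares, and then exhibits an explicit homotopy $H\colon\cc X\to\cc R(A)_1[x]$ between $i_1\alpha$ and the $(I,J)$-weak equivalence $\rho$, built from the functorial contraction $\tau\colon TA\to TA[x]$ of Lemma~\ref{ta} and the homomorphism $\rho_A\colon JA\to\Omega A$. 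The fact that each $\sigma_n$ is an $(I,J)$-weak equivalence is then a \emph{consequence} of $i_1=\sigma_0\circ(i_0\wedge S^1)$ and 2-out-of-3 --- the reverse of the logical order you propose.
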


\begin{proof}
Recall that for any functor $F$ from rings to simplicial sets,
$Sing(F)$ is defined at each ring $R$ as the diagonal of the
bisimplicial set $F(R[\Delta])$. The map $i_0:(\Sigma^\infty
rA)_0\to\cc R(A)_0$ equals $rA\to Ex^\infty\circ Sing(rA)$, which is
an $I$-weak equivalence by~\cite[3.8]{Gar}. Let us show that
$i_1:rA\wedge S^1\to\cc R(A)_1=Ex^\infty\circ Sing(r(JA))$ is an
$(I,J)$-weak equivalence. It is fully determined by the element
$\rho_A:JA\to\Omega A$, which is a zero simplex of
$\Omega(Ex^\infty\circ Sing(r(JA))(A))$, coming from the adjunction
isomorphism
   $$\Map_{\bullet}(rA\wedge S^1,Ex^\infty\circ Sing(r(JA)))\cong\Omega(Ex^\infty\circ Sing(r(JA))(A)).$$
Let $(I,0)$ denote $\Delta[1]$ pointed at 0. Consider a commutative
diagram of cofibrant objects in $U_\bullet\Re$
   $$\xymatrix{rA\ar[d]_{\eta_A^*}\ar@{ >->}[r]^(.4){\nu}&rA\wedge(I,0)\ar[d]\ar@{->>}[r]&rA\wedge S^1\ar@{=}[d]\\
               r(TA)\ar@{ >->}[r]&\cc X\ar@{->>}[r]^(.4)\alpha&rA\wedge S^1}$$
where the left square is pushout, the left map is induced by the
canonical homomorphism $\eta_A:TA\to A$ and $\nu$ is induced by the
natural inclusion $d^0:\Delta[0]\to\Delta[1]$. Lemma~\ref{ta}
implies $r(TA)$ is weakly equivalent to zero in $U_\bullet\Re_I$. It
follows that $\alpha$ is an $I$-weak equivalence.

By the universal property of pullback diagrams there is a unique
morphism $\sigma:\cc X\to r(JA)$ whose restriction to $r(TA)$ equals
$\iota^*_A$, where $\iota_A=\kr\eta_A$, which makes the diagram
   $$\xymatrix@!0{
     &rA\wedge(I,0)\ar[rrr]\ar'[d][dd] &&& \cc X\ar[dd]^\sigma\\
     rA\ar[ur]\ar[rrr]\ar[dd]_1 &&& r(TA)\ar[ur]\ar[dd]\\
     &pt\ar'[rr][rrr] &&& r(JA)\\
     rA\ar[rrr]_{\eta_A^*}\ar[ur] &&& r(TA)\ar[ur]}$$
commutative. Since the upper and the lower squares are homotopy
pushouts in $U_\bullet\Re_{I,J}$ and $rA\wedge(I,0)$ is weakly
equivalent to zero, it follows from~\cite[13.5.10]{Hir} that
$\sigma$ is an $(I,J)$-weak equivalence. Therefore the composite
map, we shall denote it by $\rho$,
   $$\cc X\bl\sigma\to r(JA)\to\cc R(A)_1$$
is an $(I,J)$-weak equivalence, where the right map is the natural
$I$-weak equivalence.

Let $\cc R(A)_1[x]\in U_\bullet\Re$ be a simplicial functor defined
as
   $$\cc R(A)_1[x](B)=\Hom_{\inda}(JA,\bb B^\Delta[x])=Ex^\infty\circ\Hom_{\ahaw}(JA,B[x]^\Delta),\quad B\in\Re.$$
There is a natural map $s:\cc R(A)_1\to\cc R(A)_1[x]$, induced by
the monomorphism $B\to B[x]$ at each $B$. It follows
from~\cite[3.2]{Gar} that this map is a weak equivalence in
$U_\bullet\Re$. The evaluation homomorphisms
$\partial_x^0,\partial_x^1:B[x]\to B$ induce a map
$(\partial_x^0,\partial_x^1):\cc R(A)_1[x]\to\cc R(A)_1\times\cc
R(A)_1$, whose composition with $s$ is the diagonal map $\cc
R(A)_1\to\cc R(A)_1\times\cc R(A)_1$. We see that $\cc R(A)_1[x]$ is
a path object for the projectively fibrant object $\cc R(A)_1$.

If we constructed a homotopy $H:\cc X\to\cc R(A)_1[x]$ such that
$\partial_x^0H=i_1\alpha$ and $\partial_x^1H=\rho$ it would follow
that $i_1\alpha$, being homotopic to the $(I,J)$-weak equivalence
$\rho$, is an $(I,J)$-weak equivalence. Since also $\alpha$ is an
$(I,J)$-weak equivalence, then so would be $i_1$.

The desired map $H$ is uniquely determined by maps $h_1:r(TA)\to\cc
R(A)_1[x]$ and $h_2:rA\wedge(I,0)\to\cc R(A)_1[x]$ such that
$h_1\eta^*_A=h_2\nu$ is defined as follows. The map $h_1$ is
uniquely determined by the homomorphism $JA\to TA[x]$ which is the
composition of $\iota_A$ and the contraction homomorphism
$\tau:TA\to TA[x]$, functorial in $A$, that exists by
Lemma~\ref{ta}. The map $h_2$ is uniquely determined by the
one-simplex $JA\to A[\Delta^1][x]$ of
$Ex^\infty\circ\Hom_{\ahaw}(JA,A[x]^\Delta)$ which is the
composition of $\rho_A:JA\to\Omega A=(t^2-t)A[t]\subset A[\Delta^1]$
and the homomorphism $\omega:A[\Delta^1]\to A[\Delta^1][x]$ sending
the variable $t$ to $1-(1-t)(1-x)$.

Thus we have shown that
   $$i_1:rA\wedge S^1\to\cc R(A)_1$$
is an $(I,J)$-weak equivalence. It follows that the composite map
   $$rA\wedge S^1\xrightarrow{i_0\wedge S^1}\cc R(A)_0\wedge S^1\lra{\sigma_0}\cc R(A)_1,$$
which is equal to $i_1$, is an $(I,J)$-weak equivalence. Hence
$\sigma_0$ is an $(I,J)$-weak equivalence, because $i_0\wedge S^1$
is an $I$-weak equivalence. More generally, one gets that every
structure map
   $$\cc R(A)_n\wedge S^1\xrightarrow{\sigma_n}\cc R(A)_{n+1}$$
is an $(I,J)$-weak equivalence.

By induction, assume that $i_n:rA\wedge S^n\to\cc R(A)_n$ is an
$(I,J)$-weak equivalence. Then $i_n\wedge S^1$ is an $(I,J)$-weak
equivalence, and hence so is $i_{n+1}=\sigma_n\circ(i_n\wedge S^1)$.
\end{proof}

Denote by $\shs(\Re)$ the stable homotopy category of $Sp(\Re)$.
Since the endofunctor $-\wedge S^1$ is an equivalence on $\shs(\Re)$
by~\cite{H}, it follows from~\cite[Ch.~7]{Hov} that $\shs(\Re)$ is a
triangulated category. Moreover, it is compactly generated with
compact generators $\{(\Sigma^\infty rA)[n]\}_{A\in\Re,n\in\bb Z}$.

\begin{cor}\label{rbra}
$\{\Sigma^\infty rA[n]\}_{A\in\Re,n\in\bb Z}$ is a family of compact
generators for $\shs(\Re)$. Moreover, there is a natural isomorphism
   $$\shs(\Re)(\Sigma^\infty rB[n],\Sigma^\infty rA)\cong\bb K_n(A,B)$$
for all $A,B\in\Re$ and $n\in\bb Z$.
\end{cor}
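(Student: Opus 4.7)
The compact generation is essentially formal. By the enriched Yoneda isomorphism $\Map_\bullet(rA,\cc X)\cong\cc X(A)$ together with the fact that colimits in $U_\bullet\Re$ are computed objectwise, each $rA$ is finitely presentable in $U_\bullet\Re$, hence $\Sigma^\infty rA$ is compact in $Sp(\Re)$. The generating (trivial) cofibrations $I_{U_\bullet\Re}$ and $J_{U_\bullet\Re}$ for $U_\bullet\Re_{I,J}$ are built as smash products of the $rA$'s with standard simplicial (co)fibrations, so Hovey's machinery \cite{H} for stable model structures on $S^1$-spectra implies that a morphism in $Sp(\Re)$ is a stable equivalence if and only if it induces an isomorphism on $\shs(\Re)(\Sigma^\infty rA[n],-)$ for every $A\in\Re$ and $n\in\bb Z$. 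This yields compact generation.

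For the natural isomorphism I would apply Theorem \ref{mainres} to replace $\Sigma^\infty rA$ by the fibrant $\Omega$-spectrum $\bb K(A,-)$, so that
\[
\shs(\Re)(\Sigma^\infty rB[n],\Sigma^\infty rA)\;\cong\;\shs(\Re)(\Sigma^\infty rB[n],\bb K(A,-)).
\]
Since $\Sigma^\infty rB$ is cofibrant and $\bb K(A,-)$ is stably fibrant, the right-hand hom-set is computed unlocalized as simplicial homotopy classes of spectrum maps. Combining the $(\Sigma^\infty,\mathrm{ev}_0)$-adjunction with the Yoneda identification $\Map_\bullet(rB,\cc X)\cong\cc X(B)$ applied levelwise, the mapping spectrum out of $\Sigma^\infty rB$ evaluates to the $\Omega$-spectrum $\bb K(A,B)$. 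For $n\geq 0$ this gives
\[
\shs(\Re)(\Sigma^\infty rB[n],\bb K(A,-))\;\cong\;\pi_n\bb K(A,B)\;=\;\bb K_n(A,B),
\]
the last equality being the very definition of the stable homotopy groups of $\bb K(A,B)$.

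The only real subtlety is the case $n<0$. Here I would rewrite $[\Sigma^\infty rB[n],\bb K(A,-)]\cong[\Sigma^\infty rB,\bb K(A,-)[-n]]$ and use the natural isomorphisms $\cc K(A,B)\cong\Omega^m\cc K(J^mA,B)$ coming from the $\Omega$-spectrum structure of $\bb K(A,B)$, so that the hom-group becomes $\pi_0\cc K(J^{-n}A,B)=\cc K_0(J^{-n}A,B)$, which by the convention recorded just before the corollary is precisely $\bb K_n(A,B)$. Naturality in $A$ and $B$ is automatic from the constructions, since all the identifications above are functorial. The entire argument rests conceptually on Theorem \ref{mainres}; everything else is routine spectrum-level bookkeeping, and keeping the shift conventions straight for negative indices is the only mild obstacle.
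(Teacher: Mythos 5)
Your proposal is correct and matches the (unwritten but clearly intended) proof in the paper. The compact-generation part is standard Hovey machinery that the paper takes for granted just above the corollary, and the hom-set computation follows directly from Theorem~\ref{mainres} combined with the enriched Yoneda lemma, the $(\Sigma^\infty,\mathrm{ev}_0)$-adjunction, and the $\Omega$-spectrum structure of $\bb K(A,-)$, exactly as you outline; your handling of the shift bookkeeping for $n<0$ via $\bb K_n(A,B)\cong\cc K_0(J^{-n}A,B)$ is also the right move.
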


Denote by $\cc S$ the full subcategory of $\shs(\Re)$ whose objects
are $\{\Sigma^\infty rA[n]\}_{A\in\Re,n\in\bb Z}$. The next
statement gives another description of the triangulated category
$D(\Re,\ff F)$.

\begin{thm}\label{embed}
The category $\cc S$ is triangulated. Moreover, there is a
contravariant equivalence of triangulated categories
   $$T:D(\Re,\ff F)\to\cc S.$$
\end{thm}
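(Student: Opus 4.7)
The plan is to construct $T$ via the universal property of $j:\Re\to D(\Re,\ff F)$ recalled in Section~\ref{prel} (see~\cite{Gar1}), and then verify full faithfulness by identifying both hom groups with unstable algebraic Kasparov $K$-theory.

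First, I would build the candidate functor. Consider $X:\Re\to\shs(\Re)^{\op}$ given by $A\mapsto\Sigma^\infty rA$; since $r$ is contravariant on $\Re$, $X$ is indeed a covariant functor into the opposite category. One must verify that $X$ is an excisive, homotopy invariant homology theory. Homotopy invariance is immediate: the map $rA\to r(A[t])$ lies in the localizing set $I$, so $\Sigma^\infty r(A[t])\to\Sigma^\infty rA$ is an isomorphism in $\shs(\Re)$. For excision, given an $\ff F$-extension $A\to B\bl{\alpha}\to C$, the associated cartesian square with $0$ in the lower-left corner is $J$-distinguished, so mapping representables into any $J$-local object and invoking the Yoneda lemma shows that $rA$ is the homotopy cofiber of $rC\to rB$ in $\Ho(U_\bullet\Re_{I,J})$. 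Stabilizing yields a distinguished triangle $\Sigma^\infty rC\to\Sigma^\infty rB\to\Sigma^\infty rA\to\Sigma\Sigma^\infty rC$ in $\shs(\Re)$, which read in $\shs(\Re)^{\op}$ is precisely the triangle $\Omega X(C)\to X(A)\to X(B)\to X(C)$ required by excision, with functorial connecting map $\partial_E$ coming from the extension.

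By the universal property of $j$, the homology theory $X$ extends uniquely (up to natural isomorphism) to a triangulated functor $D(\Re,\ff F)\to\shs(\Re)^{\op}$, equivalently to a contravariant triangulated functor $T:D(\Re,\ff F)\to\shs(\Re)$ sending $j(A)$ to $\Sigma^\infty rA$. Every object of $D(\Re,\ff F)$ has the form $j(A)[n]$ for some $A\in\Re$ and $n\in\bb Z$, so the essential image of $T$ is exactly the class of objects of $\cc S$. It remains to prove that
$$T\colon D(\Re,\ff F)(j(A),j(B)[n])\longrightarrow\shs(\Re)(T(j(B)[n]),T(j(A)))$$
is a bijection for all $A,B\in\Re$ and $n\in\bb Z$. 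By~\cite{Gar1} the left hand side is canonically isomorphic to $\bb K^{unst}_n(A,B)$, and by Corollary~\ref{rbra} so is the right hand side. Both identifications are natural in the pair $(A,B)$, and the map induced by $T$, being produced from the universal property, is forced by naturality to coincide with the composite of these two canonical isomorphisms; hence $T$ is an isomorphism on Hom groups.

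Combining essential surjectivity and full faithfulness yields the asserted contravariant equivalence. Since the fully faithful exact image of a triangulated category is a triangulated subcategory of its target, $\cc S$ inherits a triangulated structure, which proves that $\cc S$ is triangulated. The main obstacle is the compatibility check in the last step: one must verify that the abstract map on Hom groups produced by the universal property agrees with the identifications of~\cite{Gar1} on the left and Theorem~\ref{mainres}/Corollary~\ref{rbra} on the right, which ultimately reduces to checking that both universal constructions represent the same bivariant theory $\bb K^{unst}$.
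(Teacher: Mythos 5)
Your approach is essentially the same as the paper's: you build $T$ from the universal property of $j:\Re\to D(\Re,\ff F)$ applied to $A\mapsto\Sigma^\infty rA$, and then verify full faithfulness by matching Hom groups on both sides with $\bb K^{unst}_n(A,B)$ (the paper invokes its Comparison Theorem~B of~\cite{Gark} together with Corollary~\ref{rbra} at precisely this juncture). One small caution: your assertion that the compatibility of $T$ with the two identifications is ``forced by naturality'' is an overreach --- two naturally isomorphic functors can be linked by distinct natural transformations, so that alone does not pin down $T$ on morphisms --- but you correctly identify in your final sentence that what is actually required is the comparison theorem establishing that both universal constructions compute the same bivariant theory, which is exactly what the paper cites.
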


\begin{proof}
By~\cite{Gar1} the natural functor
   $$j:\Re\to D(\Re,\ff F)$$
is a universal excisive homotopy invariant homology theory. Consider
the homology theory
   $$t:\Re\to\shs(\Re)^{\op}$$
that takes an algebra $A\in\Re$ to $\Sigma^\infty rA$. It is
homotopy invariant and excisive, hence there is a unique
triangulated functor
   $$T:D(\Re,\ff F)\to\shs(\Re)^{\op}$$
such that $t=T\circ j$. If we apply $T$ to the loop extension
   $$\Omega A\to EA\to A,$$
we get an isomorphism
   $$T(\Omega A)\cong \Sigma^\infty rA[1],$$
which is functorial in $A$.

It follows from Comparison Theorem~B of~\cite{Gark} and
Corollary~\ref{rbra} that $T$ is full and faithful. Every object of
$\cc S$ is plainly equivalent to the image of an object in
$D(\Re,\ff F)$.
\end{proof}

\begin{rem}{\rm
Suppose $I$ is an infinite index set and $\{B_i\}_{i\in I}$ is a
family of algebras from $\Re$ such that the algebra $B=\oplus_IB_i$
is in $\Re$. Then $\Sigma^\infty rB$ is a compact object of
$\shs(\Re)$, but $\oplus_I\Sigma^\infty r(B_i)$ may not be compact.
Furthermore, suppose $B=\oplus_IB_i$ is also a direct sum object of
the $B_i$-s in the triangulated category $D(\Re,\ff F)$. Then
$\Hom_{D(\Re,\ff F)}(B,\oplus_IC_i)\ne\oplus_I\Hom_{D(\Re,\ff
F)}(B,C_i)$ in general, where $\{C_i\}_{i\in I}$ is a family of
algebras from $\Re$ such that the algebra $\oplus_IC_i$ is in $\Re$.

For instance, consider the triangulated category $KK$ of
Kasparov~\cite{Kas}, with which $D(\Re,\ff F)$ shares many
properties. It follows from~\cite[1.12]{RS} that $KK$ has countable
coproducts given by $A=\oplus_IA_i$, where $I$ is a countable set.
However, the functor $KK(A,-)$ does not respect countable coproducts
by~\cite[7.12]{RS}.

}\end{rem}

Recall from~\cite{Gark} that we can vary $\Re$ in the following
sense. If $\Re'$ is another $T$-closed admissible category of
algebras containing $\Re$, then $D(\Re,\ff F)$ is a full subcategory
of $D(\Re',\ff F)$.

\section{Morita stable algebraic Kasparov $K$-theory}

If $A$ is an algebra and $n > 0$ is a positive integer, then there
is a natural inclusion $\iota: A\to M_nA$ of algebras, sending $A$
to the upper left corner of $M_nA$. Throughout this section $\Re$ is
a small $T$-closed admissible category of $k$-algebras with $M_n
A\in\Re$ for every $A\in\Re$ and $n\geq 1$.

Denote by $U_\bullet\Re_{I,J}^{mor}$ the model category obtained
from $U_\bullet\Re_{I,J}$ by Bousfield localization with respect to
the family of maps of cofibrant objects
   $$\{r(M_nA)\to rA\mid A\in\Re,n>0\}.$$

Let $Sp_{mor}(\Re)$ be the stable model category of $S^1$-spectra
associated with $U_\bullet\Re_{I,J}^{mor}$. Observe that it is also
obtained from $Sp(\Re)$ by Bousfield localization with respect to
the family of maps of cofibrant objects in $Sp(\Re)$
   $$\{F_s(r(M_nA))\to F_s(rA)\mid A\in\Re,n>0,s\geq 0\}.$$
Here $F_s:U_\bullet\Re_{I,J}^{mor}\to Sp_{mor}(\Re)$ is the
canonical functor adjoint to the evaluation functor
$Ev_s:Sp_{mor}(\Re)\to U_\bullet\Re_{I,J}^{mor}$.

\begin{defs}{\rm
(see~\cite{Gark}) (1) The {\it Morita stable algebraic Kasparov
$K$-theory space\/} of two algebras $A,B\in\Re$ is the space
   $$\cc K^{mor}(A,B)=\lp(\cc K(A,B)\to\cc K(A,M_2k\otimes B)\to\cc K(A,M_3k\otimes B)\to\cdots).$$
Its homotopy groups will be denoted by $\cc K^{mor}_n(A,B)$, $n\geq
0$.

(2) A functor $X:\Re\to\bb S/(Spectra)$ is {\it Morita invariant\/}
if each morphism $X(A)\to X(M_nA)$, $A\in\Re,n>0$, is a weak
equivalence.

(3) An excisive, homotopy invariant homology theory $X:\Re\to\cc T$
is {\it Morita invariant\/} if each morphism $X(A)\to X(M_nA)$,
$A\in\Re, n>0$, is an isomorphism.

(4) The {\it Morita stable algebraic Kasparov $K$-theory spectrum\/}
of $A,B\in\Re$ is the $\Omega$-spectrum
   $$\mathbb{K}^{mor}(A,B)=(\mathcal{K}^{mor}(A,B),\mathcal{K}^{mor}(JA,B),\mathcal{K}^{mor}(J^2A,B),\ldots).$$

}\end{defs}

Denote by $\shs^{mor}(\Re)$ the (stable) homotopy category of
$Sp_{mor}(\Re)$. It is a compactly generated triangulated category
with compact generators $\{\Sigma^\infty rA[n]\}_{A\in\Re,n\in\bb
Z}$. Let $\cc S_{mor}$ be the full subcategory of $\shs^{mor}(\Re)$
whose objects are $\{\Sigma^\infty rA[n]\}_{A\in\Re,n\in\bb Z}$.

Recall from~\cite{Gar1} the definition of the triangulated category
$D_{mor}(\Re,\ff F)$. Its objects are those of $\Re$ and the set of
morphisms between two algebras $A,B\in\Re$ is defined as the colimit
of the sequence of abelian groups
   $$D(\Re,\ff F)(A,B)\to D(\Re,\ff F)(A,M_2B)\to D(\Re,\ff F)(A,M_3B)\to\cdots$$
There is a canonical functor $\Re\to D_{mor}(\Re,\ff F)$. It is a
universal excisive, homotopy invariant and Morita invariant homology
theory on $\Re$.

\begin{thm}\label{mainresmor}
Given $A\in\Re$ the composite map
   \begin{equation}\label{stmor1}
    \Sigma^\infty rA\bl i\to\cc R(A)\bl j\to\bb K(A,-)\to\bb K^{mor}(A,-)
   \end{equation}
is a stable equivalence in $Sp_{mor}(\Re)$, functorial in $A$. In
particular, there is a natural isomorphism
   $$\shs^{mor}(\Re)(\Sigma^\infty rB[n],\Sigma^\infty rA)\cong\bb K_n^{mor}(A,B)$$
for all $A,B\in\Re$ and $n\in\bb Z$. Furthermore, the category $\cc
S_{mor}$ is triangulated and there is a contravariant equivalence of
triangulated categories
   $$T:D_{mor}(\Re,\ff F)\to\cc S_{mor}.$$
\end{thm}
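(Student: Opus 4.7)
The plan is to upgrade Theorem \ref{mainres} across the Bousfield localization $Sp(\Re)\to Sp_{mor}(\Re)$. Every stable equivalence in $Sp(\Re)$ remains one in $Sp_{mor}(\Re)$, so the composite $\Sigma^\infty rA\to\cc R(A)\to\bb K(A,-)$ is already a stable equivalence in $Sp_{mor}(\Re)$ by Theorem \ref{mainres}; it therefore suffices to show that $\bb K(A,-)\to\bb K^{mor}(A,-)$ is a stable equivalence in $Sp_{mor}(\Re)$. The natural strategy is to exhibit $\bb K^{mor}(A,-)$ as a $Sp_{mor}(\Re)$-fibrant replacement of $\bb K(A,-)$.

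The core step is to verify that $\bb K^{mor}(A,-)$ is fibrant in $Sp_{mor}(\Re)$. Since $\bb K^{mor}(A,-)=\colim_n\bb K(A,M_n-)$ and each $\bb K(A,M_n-)$ is a fibrant $\Omega$-spectrum in $Sp(\Re)$ by \cite{Gark}, the filtered colimit preserves objectwise fibrancy, homotopy invariance, excision and the $\Omega$-spectrum property, so $\bb K^{mor}(A,-)$ is fibrant in $Sp(\Re)$. Morita invariance in the second variable then follows from a cofinality argument: $\bb K^{mor}(A,M_mB)=\colim_n\bb K(A,M_{nm}B)$ is naturally equivalent to $\colim_n\bb K(A,M_nB)=\bb K^{mor}(A,B)$. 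That the map $\bb K(A,-)\to\bb K^{mor}(A,-)$ is a stable equivalence in $Sp_{mor}(\Re)$ will then follow by forming the natural square with horizontal colimit-inclusion maps and vertical maps induced by the Morita equivalence $B\to M_nB$: the right vertical is an equivalence by Morita invariance of the target, and passing to the colimit over $n$ exhibits the top horizontal as a Morita-local weak equivalence.

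The Hom-computation is then immediate: since $\bb K^{mor}(A,-)$ is $Sp_{mor}(\Re)$-fibrant and $\Sigma^\infty rB$ is cofibrant,
$$
\shs^{mor}(\Re)(\Sigma^\infty rB[n],\Sigma^\infty rA)\cong[\Sigma^\infty rB[n],\bb K^{mor}(A,-)]\cong\pi_n\bb K^{mor}(A,B)=\bb K^{mor}_n(A,B).
$$
For the contravariant equivalence $T$, consider the functor $t^{mor}\colon\Re\to\shs^{mor}(\Re)^{\op}$, $A\mapsto\Sigma^\infty rA$. It is homotopy invariant, excisive, and Morita invariant, the last because the localizing maps $r(M_nA)\to rA$ are by definition weak equivalences in $U_\bullet\Re_{I,J}^{mor}$, and hence $\Sigma^\infty r(M_nA)\to\Sigma^\infty rA$ is a stable equivalence in $Sp_{mor}(\Re)$. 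By the universal property of $D_{mor}(\Re,\ff F)$ from \cite{Gar1}, $t^{mor}$ factors uniquely through a triangulated functor $T\colon D_{mor}(\Re,\ff F)\to\shs^{mor}(\Re)^{\op}$. Full faithfulness of $T$ onto $\cc S_{mor}$ follows by comparing Hom-groups via the just-proved isomorphism together with Comparison Theorem B of \cite{Gark}, which identifies $\shs^{mor}(\Re)(\Sigma^\infty rB[n],\Sigma^\infty rA)$ with $D_{mor}(\Re,\ff F)(A,\Omega^n B)$; essential surjectivity onto $\cc S_{mor}$ is immediate from the definition.

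The main technical obstacle will be to verify carefully that the filtered colimit $\bb K^{mor}(A,-)=\colim_n\bb K(A,M_n-)$ preserves the $\Omega$-spectrum property together with excision and homotopy invariance, ensuring fibrancy of $\bb K^{mor}(A,-)$ in $Sp(\Re)$ (from which Morita fibrancy then follows by the cofinality argument). Once this is in hand, the remaining arguments are formal consequences of Theorem \ref{mainres}, the Bousfield-localization framework, and the universal properties of $D_{mor}(\Re,\ff F)$ and $\shs^{mor}(\Re)$.
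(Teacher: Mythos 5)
Your strategy is to mirror the fibrant\-/replacement argument of Theorem~\ref{mainres} inside $Sp_{mor}(\Re)$: exhibit $\bb K^{mor}(A,-)$ directly as a $Sp_{mor}(\Re)$-fibrant replacement of $\Sigma^\infty rA$, and then read off the Hom computation and the equivalence $T$. The paper takes a genuinely different route: it works purely at the level of compactly generated triangulated categories, identifies $\cc S_{mor}$ with the Verdier quotient $\cc S/\cc R$ by combining Neeman's localization theorem \cite[2.1]{N} with a Gabriel--Zisman argument, produces the contravariant equivalence $T$ from the universal property of $D_{mor}(\Re,\ff F)$ together with Theorem~\ref{embed}, imports the Hom computation from~\cite[9.9]{Gark}, and only at the very end observes that the stable-equivalence claim follows. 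Your order of deduction (stable equivalence first, Hom computation second) is the reverse of the paper's, and it front-loads the hardest step.

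That hardest step is where I see a gap. You claim that $\bb K(A,-)\to\bb K^{mor}(A,-)$ is a $Sp_{mor}(\Re)$-equivalence ``by forming the natural square \ldots\ and passing to the colimit over $n$.'' But the vertical maps $\bb K(A,-)\to\bb K(A,M_n-)$ in your square, induced by the corner inclusions $B\to M_nB$, are \emph{not} individually Morita-local weak equivalences: that assertion would be exactly Morita invariance of $\bb K(A,-)$ itself, which fails in general and is precisely the defect that $\bb K^{mor}$ is designed to correct. Consequently, taking the colimit of your square only tells you that $\bb K^{mor}(A,-)\to\colim_n\bb K^{mor}(A,M_n-)$ is an equivalence, which is a tautology of the Morita invariance of the target and says nothing about the top horizontal map $\bb K(A,-)\to\bb K^{mor}(A,-)$. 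To close this gap you would need an independent computation of the $\shs^{mor}(\Re)$-homs out of the compact generators into $\bb K(A,-)$, which is essentially the content that the paper extracts from its abstract localization picture plus~\cite[9.9]{Gark}. Once that Hom computation is in hand, the stable-equivalence statement becomes immediate and is best proved \emph{afterwards}, as the paper does. The remaining parts of your argument (fibrancy of $\bb K^{mor}(A,-)$ via filtered colimits and a cofinality argument for Morita invariance, the universal-property construction of $T$, and the comparison with Theorem B of~\cite{Gark}) are sound in outline and closely parallel the paper's proof of Theorem~\ref{embed}.
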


\begin{proof}
Let $\cc S^c$ and $\cc S^c_{mor}$ be the categories of compact
objects in $\shs(\Re)$ and $\shs^{mor}(\Re)$ respectively. Denote by
$\cc R$ the full triangulated subcategory of $\cc S$ generated by
objects
   $$\{\cone(\Sigma^\infty r(M_nA)\to\Sigma^\infty rA)[k]\mid A\in\Re,n>0,k\in\bb Z\}.$$
Let $\cc R^c$ be the thick closure of $\cc R$ in $\shs(\Re)$. It
follows from~\cite[2.1]{N} that the natural functor
   $$\cc S^c/\cc R^c\to\cc S^c_{mor}$$
is full and faithful and $\cc S^c_{mor}$ is the thick closure of
$\cc S^c/\cc R^c$.

We claim that the natural functor
   \begin{equation}\label{sr}
    \cc S/\cc R\to\cc S^c/\cc R^c
   \end{equation}
is full and faithful. For this consider a map $\alpha:X\to Y$ in
$\cc S^c$ such that its cone $Z$ is in $\cc R^c$ and $Y\in\cc S$. We
can find $Z'\in\cc R^c$ such that $Z\oplus Z'$ is isomorphic to an
object $W\in\cc R$. Construct a commutative diagram in $\cc S^c$
   $$\xymatrix{U\ar[r]\ar[d]_s&Y\ar[r]\ar@{=}[d]&W\ar[r]\ar[d]_p&\Sigma U\ar[d]\\
               X\ar[r]^\alpha&Y\ar[r]&Z\ar[r]&\Sigma X,}$$
where $p$ is the natural projection. We see that $\alpha s$ is such
that its cone $W$ belongs to $\cc R$. Standard facts for
Gabriel--Zisman localization theory imply~\eqref{sr} is a fully
faithful embedding. It also follows that
   $$\cc S_{mor}=\cc S/\cc R.$$

We want to compute Hom sets in $\cc S/\cc R$. For this observe first
that there is a contravariant equivalence of triangulated categories
   $$\tau:D(\Re,\ff F)/\ff U\to\cc S_{mor},$$
where $\ff U$ is the smallest full triangulated subcategory of
$D(\Re,\ff F)$ containing
   $$\{\cone( A\bl\iota\to M_nA)\mid A\in\Re,n>0\}.$$
This follows from Theorem~\ref{embed}.

By construction, every excisive homotopy invariant Morita invariant
homology theory $\Re\to\cc T$ factors through $D(\Re,\ff F)/\ff U$.
Since $\Re\to D_{mor}(\Re,\ff F)$ is a universal excisive homotopy
invariant Morita invariant homology theory~\cite{Gar1}, we see that
there exists a triangle equivalence of triangulated categories
   $$D_{mor}(\Re,\ff F)\simeq D(\Re,\ff F)/\ff U.$$
So there is a natural contravariant triangle equivalence of
triangulated categories
   $$T:D_{mor}(\Re,\ff F)\to\cc S_{mor}.$$
Using this and~\cite[9.8]{Gark}, there is a natural isomorphism
   $$\cc S_{mor}(\Sigma^\infty rB[n],\Sigma^\infty rA)\cong\bb K_n^{mor}(A,B)$$
for all $A,B\in\Re$ and $n\in\bb Z$. The fact that~\eqref{stmor1} is
a stable equivalence in $Sp_{mor}(\Re)$ is now obvious.
\end{proof}

\section{Stable algebraic Kasparov $K$-theory}

If $A$ is an algebra set $M_\infty A=\cup_n M_nA$. There is a
natural inclusion $\iota: A\to M_\infty A$ of algebras, sending $A$
to the upper left corner of $M_\infty A$. Throughout the section
$\Re$ is a small $T$-closed admissible category of $k$-algebras with
$M_\infty(A)\in\Re$ for all $A\in\Re$.

Denote by $U_\bullet\Re_{I,J}^{\infty}$ the model category obtained
from $U_\bullet\Re_{I,J}$ by Bousfield localization with respect to
the family of maps of cofibrant objects
   $$\{r(M_\infty A)\to rA\mid A\in\Re\}.$$

Let $Sp_{\infty}(\Re)$ be the stable model category of $S^1$-spectra
associated with $U_\bullet\Re_{I,J}^{\infty}$. Observe that it is
also obtained from $Sp(\Re)$ by Bousfield localization with respect
to the family of maps of cofibrant objects in $Sp(\Re)$
   $$\{F_s(r(M_\infty A))\to F_s(rA)\mid A\in\Re,s\geq 0\}.$$

\begin{defs}{\rm
(see~\cite{Gark}) (1) The {\it stable algebraic Kasparov $K$-theory
space\/} of two algebras $A,B\in\Re$ is the space
   $$\cc K^{st}(A,B)=\lp(\cc K(A,B)\to\cc K(A,M_\infty k\otimes B)\to\cc K(A,M_\infty k\otimes M_\infty k\otimes  B)\to\cdots).$$
Its homotopy groups will be denoted by $\cc K^{st}_n(A,B)$, $n\geq
0$.

(2) A functor $X:\Re\to\bb S/(Spectra)$ is {\it stable\/} or {\it
$M_\infty$-invariant\/} if $X(A)\to X(M_\infty A)$ is a weak
equivalence for all $A\in\Re$.

(3) An excisive, homotopy invariant homology theory $X:\Re\to\cc T$
is {\it stable\/} or {\it $M_\infty$-invariant\/} if $X(A)\to
X(M_\infty A)$ is an isomorphism for all $A\in\Re$.

(4) The {\it stable algebraic Kasparov $K$-theory spectrum\/} for
$A,B\in\Re$ is the $\Omega$-spectrum
   $$\mathbb{K}^{st}(A,B)=(\mathcal{K}^{st}(A,B),\mathcal{K}^{st}(JA,B),\mathcal{K}^{st}(J^2A,B),\ldots).$$

}\end{defs}

Denote by $\shs^{\infty}(\Re)$ the (stable) homotopy category of
$Sp_{\infty}(\Re)$. It is a compactly generated triangulated
category with compact generators $\{\Sigma^\infty
rA[n]\}_{A\in\Re,n\in\bb Z}$. Let $\cc S_{\infty}$ be the full
subcategory of $\shs^{\infty}(\Re)$ whose objects are
$\{\Sigma^\infty rA[n]\}_{A\in\Re,n\in\bb Z}$.

Recall from~\cite{Gar1} the definition of the triangulated category
$D_{st}(\Re,\ff F)$. Its objects are those of $\Re$ and the set of
morphisms between two algebras $A,B\in\Re$ is defined as the colimit
of the sequence of abelian groups
   $$D(\Re,\ff F)(A,B)\to D(\Re,\ff F)(A,M_\infty k\otimes_kB)
     \to D(\Re,\ff F)(A,M_\infty k\otimes_kM_\infty k\otimes_kB)\to\cdots$$
There is a canonical functor $\Re\to D_{st}(\Re,\ff F)$. It is the
universal excisive, homotopy invariant and stable homology theory on
$\Re$.

The proof of the next result literally repeats that of
Theorem~\ref{mainresmor} if we replace the algebras $M_nA$ with
$M_\infty A$ and the categories $\cc S_{mor}$ and $D_{mor}(\Re,\ff
F)$ with $\cc S_\infty$ and $D_{st}(\Re,\ff F)$ respectively.

\begin{thm}\label{mainresst}
Given $A\in\Re$ the composite map
   \begin{equation*}\label{stmor}
    \Sigma^\infty rA\bl i\to\cc R(A)\bl j\to\bb K(A,-)\to\bb K^{st}(A,-)
   \end{equation*}
is a stable equivalence in $Sp_{\infty}(\Re)$, functorial in $A$. In
particular, there is a natural isomorphism
   $$\shs^{\infty}(\Re)(\Sigma^\infty rB[n],\Sigma^\infty rA)\cong\bb K_n^{st}(A,B)$$
for all $A,B\in\Re$ and $n\in\bb Z$. Furthermore, the category $\cc
S_{\infty}$ is triangulated and there is a contravariant equivalence
of triangulated categories
   $$T:D_{st}(\Re,\ff F)\to\cc S_{\infty}.$$
\end{thm}

Let $\Gamma A$, for $A\in\ahaw$, be the algebra of $\bb N\times\bb
N$-matrices which satisfy the following two properties.
\begin{itemize}
\item[(i)] The set $\{a_{ij}\mid i,j \in \bb N\}$ is finite.
\item[(ii)] There exists a natural number $N \in \bb N$ such that each row and each column has at most
$N$ nonzero entries.
\end{itemize}
$M_\infty A\subset\Gamma A$ is an ideal. We put
   $$\Sigma A=\Gamma A/M_\infty A.$$
We note that $\Gamma A$, $\Sigma A$ are the cone and suspension
rings of $A$ considered by Karoubi and Villamayor in \cite[p.
269]{KV}, where a different but equivalent definition is given.
By~\cite{CT} there are natural ring isomorphisms
   $$\Gamma A\cong\Gamma k\otimes A,\quad \Sigma A\cong\Sigma k\otimes A.$$
We call the short exact sequence
   $$M_\infty A\rightarrowtail\Gamma A\twoheadrightarrow\Sigma A$$
the {\it cone extension}. By~\cite{CT} $\Gamma
A\twoheadrightarrow\Sigma A$ is a split surjection of $k$-modules.

Let $\tau$ be the $k$-algebra which is unital and free on two
generators $\alpha$ and $\beta$ satisfying the relation $\alpha
\beta=1$. By~\cite[4.10.1]{CT} the kernel of the natural map
   $$\tau\to k[t^{\pm 1}]$$
is isomorphic to $M_\infty k$. We set $\tau_0 = \tau
\oplus_{k[t^{\pm 1}]} \sigma$.

Let $A$ be a $k$-algebra. We get an extension
\[ \xymatrix{ M_\infty A  \ar[r] & \tau A \ar[r] & A[t^{\pm 1}],} \]
and an analogous extension
   \begin{equation*}\label{taunol}
    \xymatrix{ M_\infty A  \ar[r] & \tau_0 A \ar[r] & {\sigma} A.}
   \end{equation*}

\begin{defs}{\rm
We say that an admissible category of $k$-algebras $\Re$ is {\it
$\tau_0$-closed\/} (respectively {\it $\Gamma$-closed}) if
$\tau_0A\in\Re$ (respectively $\Gamma A\in\Re$) for all $A\in\Re$.

}\end{defs}

Cuntz~\cite{Cu2,Cu,Cu1} constructed a triangulated category
$kk^{lca}$ whose objects are the locally convex algebras. Later
Corti\~nas--Thom~\cite{CT} constructed in a similar fashion a
triangulated category $kk$ whose objects are all $k$-algebras
$\ahaw$. If we suppose that $\Re$ is also $\Gamma$-closed, then one
can define a full triangulated subcategory $kk(\Re)$ of $kk$ whose
objects are those of $\Re$.

It can be shown similar to~~\cite[7.4]{Gar} or~\cite[9.4]{Gar1} that
there is an equivalence of triangulated categories
   $$D_{st}(\Re,\ff F)\lra{\sim} kk(\Re).$$

An important computational result of Corti\~nas--Thom~\cite{CT}
states that there is an isomorphism of graded abelian groups
   $$\bigoplus_{n\in\bb Z}kk(\Re)(k,\Omega^nA)\cong\bigoplus_{n\in\bb Z}KH_n(A),$$
where the right hand side is the homotopy $K$-theory of $A\in\Re$ in
the sense of Weibel~\cite{W1}.

Summarizing the above arguments together with
Theorem~\ref{mainresst} we obtain the following

\begin{thm}\label{mainresstcu}
Suppose $\Re$ is $\Gamma$-closed. Then there is a contravariant
equivalence of triangulated categories
   $$kk(\Re)\to\cc S_{\infty}.$$
Moreover, there is a natural isomorphism
   $$\shs^{\infty}(\Re)(\Sigma^\infty rA[n],\Sigma^\infty r(k))\cong KH_n(A).$$
for any $A\in\Re$ and any integer $n$.
\end{thm}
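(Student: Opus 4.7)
The plan is to assemble the theorem from three ingredients already available just above: the contravariant equivalence $T\colon D_{st}(\Re,\ff F)\to\cc S_\infty$ of Theorem~\ref{mainresst}, the triangulated equivalence $D_{st}(\Re,\ff F)\simeq kk(\Re)$ valid under the $\Gamma$-closed assumption, and the Corti\~nas--Thom identification of $kk(\Re)(k,\Omega^n A)$ with $KH_n(A)$.

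First I would compose the two equivalences to obtain the contravariant triangulated functor $kk(\Re)\to\cc S_\infty$ asserted in the theorem. The equivalence $kk(\Re)\simeq D_{st}(\Re,\ff F)$ is covariant and is the identity on objects of $\Re$, while $T$ is contravariant and sends $A$ to $\Sigma^\infty rA$; the composite is therefore a contravariant triangulated equivalence acting on objects as $A\mapsto\Sigma^\infty rA$, which is the first assertion of the theorem.

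Next I would deduce the Hom group identification. By Theorem~\ref{embed} (which also governs the Morita and stable localizations through Theorems~\ref{mainresmor} and \ref{mainresst}), the shift $[1]$ on $\cc S_\infty$ corresponds under $T$ to the loop functor $\Omega$ on $D_{st}(\Re,\ff F)$, and hence also on $kk(\Re)$ under the identification above. Invoking contravariance of the composed equivalence then yields
$$\shs^\infty(\Re)(\Sigma^\infty rA[n],\Sigma^\infty rk)\cong kk(\Re)(k,\Omega^n A),$$
and plugging in the Corti\~nas--Thom isomorphism $kk(\Re)(k,\Omega^n A)\cong KH_n(A)$ completes the computation.

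The only real obstacle is a compatibility check: one must ensure that the equivalence $D_{st}(\Re,\ff F)\simeq kk(\Re)$ (asserted to follow as in \cite[7.4]{Gar} or \cite[9.4]{Gar1}) intertwines the loop functors on both sides, so that the $\Omega^n A$ appearing on the $kk$-side is transported to precisely the $[n]$-shift on the $\cc S_\infty$-side used in the statement. Once this bookkeeping is in place, the theorem follows by a direct composition of equivalences, with no further genuinely new content required.
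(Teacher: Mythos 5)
Your proposal is correct and follows essentially the same route as the paper: the paper itself presents this theorem as a summary of the three ingredients you cite, namely Theorem~\ref{mainresst}, the triangle equivalence $D_{st}(\Re,\ff F)\simeq kk(\Re)$, and the Corti\~nas--Thom identification $kk(\Re)(k,\Omega^n A)\cong KH_n(A)$. The compatibility of loop functors you flag is indeed the only bookkeeping point, and it is handled implicitly since both loop functors are induced by the same loop extension.
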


\section{$K$-motives of algebras}

Throughout the section we assume that $\Re$ is a small tensor closed
and $T$-closed admissible category of $k$-algebras with
$M_\infty(k)\in\Re$. It follows that $M_\infty A:\cong A\otimes
M_\infty(k)\in\Re$ for all $A\in\Re$.

In this section we define and study the triangulated category of
$K$-motives. It shares many properties with the category of
$K$-motives for algebraic varieties constructed in~\cite{GP,GP1}.

Since $\Re$ is tensor closed, it follows that
$U_\bullet\Re_{I,J}^{\infty}$ is a monoidal model category. Let
$Sp_\infty^\Sigma(\Re)$ be the monoidal category of symmetric
spectra in the sense of Hovey~\cite{H} associated to
$U_\bullet\Re_{I,J}^{\infty}$.

\begin{defs}{\rm
The {\it category of $K$-motives $DK(\Re)$} is the stable homotopy
category of $Sp_\infty^\Sigma(\Re)$. The {\it $K$-motive\/} $M_K(A)$
of an algebra $A\in\Re$ is the image of $A$ in $DK(\Re)$, that is
$M_K(A)=\Sigma^\infty rA$. Thus one has a canonical contravariant
functor
   $$M_K:\Re\to DK(\Re)$$
sending algebras to their $K$-motives.

}\end{defs}

The following proposition follows from standard facts for monoidal
model categories.

\begin{prop}\label{kmo}
$DK(\Re)$ is a symmetric monoidal compactly generated triangulated
category with compact generators $\{M_K(A)\}_{A\in\Re}$. For any two
algebras $A,B\in\Re$ one has a natural isomorphism
   $$M_K(A)\otimes M_K(B)\cong M_K(A\otimes B).$$
Furthermore, any extension of algebras in $\Re$
   $$(E):\quad A\to B\to C$$
induces a triangle in $DK(\Re)$
   $$M_K(E):\quad M_K(C)\to M_K(B)\to M_K(A)\xrightarrow+.$$
\end{prop}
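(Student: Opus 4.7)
The plan is to deduce everything from Hovey's general theory of symmetric spectra in a pointed simplicial, symmetric monoidal model category~\cite{H}, applied to $U_\bullet\Re_{I,J}^{\infty}$ with suspension object $S^1$. The preceding sections have already assembled the required input: $U_\bullet\Re_{I,J}^{\infty}$ is a cellular, proper, simplicial, symmetric monoidal model category (monoidality is noted at the beginning of Section~6 and is inherited from the monoidality of $U_\bullet\Re_{I,J}$, compatibility with the $M_\infty$-localization following at once from $M_\infty A\cong A\otimes M_\infty k$). Hovey's construction therefore endows $Sp_\infty^\Sigma(\Re)$ with a stable, cellular, proper, symmetric monoidal model structure whose homotopy category $DK(\Re)$ is a symmetric monoidal compactly generated triangulated category. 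Since $S^1$ is already symmetric as a simplicial set, Hovey's comparison result supplies a Quillen equivalence between $Sp_\infty(\Re)$ and $Sp_\infty^\Sigma(\Re)$, so the compact generators of $\shs^\infty(\Re)$ furnished by Theorem~\ref{mainresst} transport to the claimed set $\{M_K(A)\}_{A\in\Re}$ of compact generators of $DK(\Re)$.

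The tensor formula follows from the natural isomorphism $rA\otimes rB\cong r(A\otimes B)$ recalled in Section~3.4 together with the standard fact that $\Sigma^\infty$ is strong symmetric monoidal on cofibrant objects in any pointed symmetric monoidal model category; since each $rA$ is cofibrant one has
$$M_K(A)\otimes M_K(B)\cong\Sigma^\infty(rA\otimes rB)\cong\Sigma^\infty r(A\otimes B)=M_K(A\otimes B).$$

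For the triangle attached to an $\ff F$-extension $(E):A\to B\xrightarrow{g}C$, I would encode $(E)$ as the distinguished square
$$\xymatrix{A\ar[r]\ar[d]&0\ar[d]\\ B\ar[r]^g & C}$$
in $\Re$ (distinguished because $g\in\ff F$). By the very definition of the set $J$ of Section~3.3, the induced diagram of pointed representables
$$\xymatrix{rC\ar[r]\ar[d]&\ast\ar[d]\\ rB\ar[r] & rA}$$
is a homotopy pushout in $U_\bullet\Re_{I,J}$, and hence a fortiori in the further Bousfield localization $U_\bullet\Re_{I,J}^{\infty}$. Applying the left Quillen functor $\Sigma^\infty:U_\bullet\Re_{I,J}^{\infty}\to Sp_\infty^\Sigma(\Re)$ and passing to $DK(\Re)$ converts this into the desired distinguished triangle $M_K(C)\to M_K(B)\to M_K(A)\xrightarrow{+}$.

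The main technical obstacle is simply to confirm that Hovey's hypotheses for producing a genuinely monoidal model structure on $Sp_\infty^\Sigma(\Re)$ -- cellularity, left properness, the pushout-product axiom and the monoid axiom -- all survive the Bousfield localizations defining $U_\bullet\Re_{I,J}^{\infty}$; once this is in place, the three assertions of the proposition reduce to unwinding definitions and invoking the already-established Theorem~\ref{mainresst} together with the excision built into the motivic model structure.
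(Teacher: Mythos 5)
Your proposal is correct and elaborates exactly what the paper's one-line proof invokes as ``standard facts for monoidal model categories.'' The three ingredients you assemble---Hovey's construction of a symmetric monoidal stable model structure on symmetric spectra over the monoidal model category $U_\bullet\Re_{I,J}^{\infty}$ (with the Quillen equivalence to ordinary spectra transporting compact generators from $\shs^\infty(\Re)$), the strong monoidality of $\Sigma^\infty$ together with $rA\otimes rB\cong r(A\otimes B)$, and the encoding of an $\ff F$-extension as the distinguished square $A\to 0$, $B\to C$ whose associated representable square $rC\to\ast$, $rB\to rA$ is a homotopy pushout by the very definition of $J$-locality---are precisely the right ones, and the unwinding you give is sound. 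One small citation slip: the compact generation of $\shs^\infty(\Re)$ by $\{\Sigma^\infty rA[n]\}$ is stated in the paragraph preceding Theorem~\ref{mainresst}, not in the theorem itself, but this does not affect the argument.
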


There is a pair of adjoint functors
    $$V:Sp_\infty(\Re)\leftrightarrows Sp_\infty^\Sigma(\Re):U,$$
where $U$ is the right Quillen forgetful functor. These form a
Quillen equivalence. In particular, the induced functors
   $$V:\shs^\infty(\Re)\leftrightarrows DK(\Re):U$$
are equivalences of triangulated categories. It follows from
Proposition~\ref{kmo} that $\shs^{\infty}(\Re)$ is a symmetric
monoidal category and
   $$\Sigma^\infty rA\otimes\Sigma^\infty rB\cong\Sigma^\infty r(A\otimes B)$$
for all $A,B\in\Re$. Moreover,
   $$V(\Sigma^\infty rA)\cong M_K(A)$$
for all $A\in\Re$.

Summarizing the above arguments together with
Theorem~\ref{mainresst} we get the following

\begin{thm}\label{kmotives}
For any two algebras $A,B\in\Re$ and any integer $n$ one has a
natural isomorphism of abelian groups
   \begin{equation*}
    DK(\Re)(M_K(B)[n],M_K(A))\cong\bb K^{st}_n(A,B).
   \end{equation*}
If $\cc T$ is the full subcategory of $DK(\Re)$ spanned by
$K$-motives of algebras $\{M_K(A)\}_{A\in\Re}$ then $\cc T$ is
triangulated and there is an equivalence of triangulated categories
   $$D_{st}(\Re,\ff F)\to\cc T^{\op}$$
sending an algebra $A\in\Re$ to its $K$-motive $M_K(A)$.
\end{thm}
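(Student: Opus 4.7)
The plan is to transport the content of Theorem~\ref{mainresst} across the triangulated equivalence $V:\shs^\infty(\Re)\xrightarrow{\sim} DK(\Re)$ coming from the Quillen equivalence between $Sp_\infty(\Re)$ and its symmetric analogue $Sp_\infty^\Sigma(\Re)$. Since $V$ is triangulated, commutes with shifts, and satisfies $V(\Sigma^\infty rA)\cong M_K(A)$, one obtains at once the chain of natural isomorphisms
$$DK(\Re)(M_K(B)[n],M_K(A))\cong\shs^{\infty}(\Re)(\Sigma^\infty rB[n],\Sigma^\infty rA)\cong\bb K^{st}_n(A,B),$$
which settles the first assertion.

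For the equivalence $D_{st}(\Re,\ff F)\to\cc T^{\op}$, I would argue in the style of the proof of Theorem~\ref{embed}. The contravariant functor $M_K:\Re\to DK(\Re)$ sends extensions to triangles by Proposition~\ref{kmo}; it is homotopy invariant because $\Sigma^\infty rA\to\Sigma^\infty r(A[t])$ is already an $(I,J)$-weak equivalence, and it is $M_\infty$-invariant by construction of $Sp_\infty^\Sigma(\Re)$. Universality of $j:\Re\to D_{st}(\Re,\ff F)$ then produces a triangulated functor $D_{st}(\Re,\ff F)\to DK(\Re)^{\op}$ whose essential image lies in $\cc T^{\op}$. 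That this factorization is an equivalence onto $\cc T^{\op}$ is most cleanly seen by composing the contravariant equivalence $T:D_{st}(\Re,\ff F)\to\cc S_\infty$ of Theorem~\ref{mainresst} with the triangulated equivalence $V|_{\cc S_\infty}:\cc S_\infty\xrightarrow{\sim}\cc T$ induced by $V$; the former is an equivalence by Theorem~\ref{mainresst}, the latter because $V$ is an equivalence identifying the respective generators $\Sigma^\infty rA$ and $M_K(A)$. In particular $\cc T^{\op}$, being the essential image of a triangulated equivalence from a triangulated category, is triangulated, whence so is $\cc T$.

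The single non-formal point is the comparison $V:Sp_\infty(\Re)\to Sp_\infty^\Sigma(\Re)$: one must verify that this Quillen pair, after the Bousfield localizations encoding homotopy invariance, excision and $M_\infty$-invariance, remains a Quillen equivalence sending $\Sigma^\infty rA$ to $M_K(A)$. This is Hovey's symmetric-spectra comparison theorem applied to the monoidal model category $U_\bullet\Re_{I,J}^{\infty}$, whose sole hypothesis---that the cyclic permutation on $S^1\wedge S^1\wedge S^1$ acts as the identity in the stable homotopy category---is inherited from pointed simplicial sets. Once that is in hand, the remainder of the argument is a routine assembly of the pieces already established.
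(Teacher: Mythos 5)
Your proposal takes essentially the same route as the paper: the paper proves the theorem by ``summarizing the above arguments together with Theorem~\ref{mainresst}," where ``the above arguments" are precisely the Quillen equivalence $V:Sp_\infty(\Re)\leftrightarrows Sp_\infty^\Sigma(\Re):U$, the induced triangulated equivalence $\shs^\infty(\Re)\simeq DK(\Re)$, and the identification $V(\Sigma^\infty rA)\cong M_K(A)$. You make explicit the composition $V|_{\cc S_\infty}\circ T$ and the point that Hovey's comparison between ordinary and symmetric spectra persists after Bousfield localization, both of which the paper leaves implicit but which are exactly the intended justification, so your argument is correct and aligned with the paper's.
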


The next result is reminiscent of a similar result for $K$-motives
of algebraic varieties in the sense of~\cite{GP,GP1} identifying the
$K$-motive of the point with algebraic $K$-theory.

\begin{cor}\label{kak}
Suppose $\Re$ is $\Gamma$-closed. Then for any algebra $A$ and any
integer $n$ one has a natural isomorphism of abelian groups
   \begin{equation*}
    DK(\Re)(M_K(A)[n],M_K(k))\cong KH_n(A),
   \end{equation*}
where the right hand side is the $n$-th homotopy $K$-theory group in
the sense of Weibel~\cite{W1}.
\end{cor}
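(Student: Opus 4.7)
The plan is to chain two identifications, both already obtained in the paper, so that the corollary falls out as a direct consequence. The main tools are the Quillen equivalence between non-symmetric and symmetric $S^1$-spectra established just before Theorem~\ref{kmotives}, and the computation of morphisms from $\Sigma^\infty rA$ to $\Sigma^\infty rk$ in $\shs^\infty(\Re)$ in terms of homotopy $K$-theory.

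First, I would recall from the discussion preceding Theorem~\ref{kmotives} that the adjunction
$$V:Sp_\infty(\Re)\leftrightarrows Sp_\infty^\Sigma(\Re):U$$
is a Quillen equivalence, and the induced functors on homotopy categories are equivalences of triangulated categories sending $\Sigma^\infty rA$ to $M_K(A)$ for every $A\in\Re$. Evaluating this equivalence on the pair $(A,k)$ yields a natural isomorphism of abelian groups
$$DK(\Re)(M_K(A)[n],M_K(k))\cong\shs^\infty(\Re)(\Sigma^\infty rA[n],\Sigma^\infty rk).$$
This step is essentially bookkeeping: it transports the computation from the symmetric side, where $K$-motives live, to the non-symmetric side, where the fibrant replacement results of the earlier sections apply directly.

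Second, I would invoke the second assertion of Theorem~\ref{mainresstcu}, whose hypothesis is exactly the $\Gamma$-closedness of $\Re$ assumed in the corollary, and which provides a natural isomorphism
$$\shs^\infty(\Re)(\Sigma^\infty rA[n],\Sigma^\infty rk)\cong KH_n(A).$$
Composing the two natural isomorphisms gives the claimed formula, with naturality in $A$ preserved throughout.

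There is no serious obstacle at this stage of the paper; the substantive content lies upstream. In particular, Theorem~\ref{mainresst} identifies $\bb K^{st}(A,-)$ as a fibrant replacement of $\Sigma^\infty rA$ in $Sp_\infty(\Re)$, and the Corti\~nas--Thom identification $kk(\Re)(k,\Omega^nA)\cong KH_n(A)$ quoted just before Theorem~\ref{mainresstcu} converts stable bivariant groups into homotopy $K$-theory. Once these are available, the corollary is formal, and its role is simply to package the identification in the motivic language of $K$-motives, matching the analogous statement for algebraic varieties in~\cite{GP} where the $K$-motive of the point recovers algebraic $K$-theory.
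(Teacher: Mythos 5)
Your proof is correct and takes essentially the same route as the paper. The paper cites Theorem~\ref{kmotives} (to get $DK(\Re)(M_K(A)[n],M_K(k))\cong\bb K^{st}_n(k,A)$) together with~\cite[10.6]{Gar1} (to identify $\bb K^{st}_n(k,A)\cong KH_n(A)$); you instead chain the Quillen equivalence $V:Sp_\infty(\Re)\rightleftarrows Sp_\infty^\Sigma(\Re):U$ explicitly with the second assertion of Theorem~\ref{mainresstcu}, but since Theorem~\ref{kmotives} is itself proved via that Quillen equivalence and Theorem~\ref{mainresstcu} packages the same Corti\~nas--Thom identification, the two derivations are mathematically the same argument with slightly different internal citations.
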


\begin{proof}
This follows from Theorem~\cite[10.6]{Gar1} and the preceding
theorem.
\end{proof}

We finish the section by showing that the category $kk(\Re)$ of
Corti\~nas--Thom~\cite{CT} can be identified with the $K$-motives of
algebras.

\begin{thm}\label{kktop}
Suppose $\Re$ is $\Gamma$-closed. Then there is a natural
equivalence of triangulated categories
   $$kk(\Re)\lra{\sim}\cc T^{\op}$$
sending an algebra $A\in\Re$ to its $K$-motive $M_K(A)$.
\end{thm}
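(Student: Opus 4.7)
The plan is to obtain the equivalence as the composite of two triangulated equivalences that are already available at this point in the paper. By the equivalence $D_{st}(\Re,\ff F)\xrightarrow{\sim} kk(\Re)$ established just before Theorem~\ref{mainresstcu} (using the $\Gamma$-closedness hypothesis on $\Re$), together with the equivalence $D_{st}(\Re,\ff F)\to\cc T^{\op}$ of Theorem~\ref{kmotives}, one gets a chain of equivalences
\[
kk(\Re)\xleftarrow{\sim}D_{st}(\Re,\ff F)\xrightarrow{\sim}\cc T^{\op}.
\]
So first I would check that both of these equivalences act by sending an algebra $A\in\Re$ to itself on objects, which is already built into the definitions of $kk(\Re)$ and $D_{st}(\Re,\ff F)$ (both have $\Ob\Re$ as object class) and into the statement of Theorem~\ref{kmotives}.

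Second, I would invert the first equivalence to produce the functor $kk(\Re)\to\cc T^{\op}$ sending $A$ to $M_K(A)$. Since both functors in the composite are equivalences of triangulated categories and both are the identity on objects (modulo the assignment $A\mapsto M_K(A)$ at the end), the composite is automatically a triangulated equivalence. Naturality in $A$ follows from the naturality of the two constituent equivalences.

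Third, I would make explicit why the resulting functor genuinely sends $A\mapsto M_K(A)$: the functor $\Re\to D_{st}(\Re,\ff F)$ is universal excisive, homotopy invariant and $M_\infty$-invariant, and both $\Re\to kk(\Re)$ and $\Re\to\cc T^{\op}$, $A\mapsto M_K(A)$, are excisive, homotopy invariant and $M_\infty$-invariant homology theories into triangulated categories. By universality, each factors through $D_{st}(\Re,\ff F)$ in an essentially unique way, which pins down the composite on objects.

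I do not expect any serious obstacle: the hard work has already been done in Theorem~\ref{mainresst}, Theorem~\ref{kmotives}, and the Corti\~nas--Thom identification $D_{st}(\Re,\ff F)\simeq kk(\Re)$. The only thing to double-check is that the universality arguments of~\cite{CT} and~\cite{Gar1} go through when one restricts to $\Re\subset\ahaw$ rather than working on all of $\ahaw$; this is why the $\Gamma$-closedness assumption appears, guaranteeing that $kk(\Re)$ is actually a full triangulated subcategory of $kk$ and that the cone extension lives in $\Re$. Hence the proof reduces to the one-line composition, which I would record as: apply Theorem~\ref{kmotives} and the equivalence $kk(\Re)\simeq D_{st}(\Re,\ff F)$ discussed above.
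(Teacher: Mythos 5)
Your proposal matches the paper's proof exactly: the paper likewise obtains the equivalence by combining Theorem~\ref{kmotives} with the triangle equivalence $D_{st}(\Re,\ff F)\simeq kk(\Re)$ (citing \cite[7.4]{Gar} or \cite[9.4]{Gar1}). The additional verifications you outline about universality and the assignment on objects are correct but are subsumed in those cited results, so your argument is essentially the same, just spelled out in slightly more detail.
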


\begin{proof}
This follows from Theorem~\ref{kmotives} and the fact that
$D_{st}(\Re,\ff F)$ and $kk(\Re)$ are triangle equivalent
(see~\cite[7.4]{Gar} or~\cite[9.4]{Gar1}).
\end{proof}

The latter theorem shows in particular that $kk(\Re)$ is embedded
into the compactly generated triangulated category of $K$-motives
$DK(\Re)$ and generates it.

\section{The $\bb G$-stable theory}

The stable motivic homotopy theory over a field is the homotopy
theory of $T$-spectra, where $T=S^1\wedge\bb G_m$
(see~\cite{VoeICM,J}). There are various equivalent definitions of
the theory, one of which is given in terms of $(S^1,\bb
G_m)$-bispectra. In our context the role of the motivic space $\bb
G_m$ is played by $\sigma=(t-1)k[t^{\pm 1}]$. Its simplicial functor
$r(\sigma)$ is denoted by $\bb G$. In this section we define the
stable category of $(S^1,\bb G)$-bispectra and construct an explicit
fibrant replacement of the $(S^1,\bb G)$-bispectrum
$\Sigma^\infty_{\bb G}\Sigma^\infty rA$ of an algebra $A$. One can
also define a Quillen equivalent category of $T$-spectra, where
$T=S^1\wedge\bb G$, and compute an explicit fibrant replacement for
the $T$-spectrum of an algebra. However we prefer to work with
$(S^1,\bb G)$-bispectra rather than $T$-spectra in order to study
$K$-motives of algebras in terms of associated $(S^1,\bb
G)$-bispectra (see the next section).

Throughout the section we assume that $\Re$ is a small tensor closed
and $T$-closed admissible category of $k$-algebras. We have that
$\sigma A:=A\otimes\sigma\in\Re$ for all $A\in\Re$.

Recall that $U_\bullet\Re_{I,J}$ is a monoidal model category. It
follows from~\cite[6.3]{H} that $Sp(\Re)$ is a
$U_\bullet\Re_{I,J}$-model category. In particular
   $$-\otimes\bb G:Sp(\Re)\to Sp(\Re)$$
is a left Quillen endofunctor.

By definition, a {\it $(S^1,\bb G)$-bispectrum\/} or {\it
bispectrum\/} $\cc E$ is given by a sequence $(E_0,E_1,\ldots)$,
where each $E_j$ is a $S^1$-spectrum of $Sp(\Re)$, together with
bonding morphisms $\epsilon_n:E_n\wedge\bb G\to E_{n+1}$. Maps are
sequences of maps in $Sp(\Re)$ respecting the bonding morphisms. We
denote the category of bispectra by $Sp_{\bb G}(\Re)$. It can be
regarded as the category of $\bb G$-spectra on $Sp(\Re)$ in the
sense of Hovey~\cite{H}.

$Sp_{\bb G}(\Re)$ is equipped with the stable
$U_\bullet\Re_{I,J}$-model structure in which weak equivalences are
defined by means of bigraded homotopy groups. The bispectrum object
$\cc E$ determines a sequence of maps of $S^1$-spectra
   $$E_0\xrightarrow{\tilde\epsilon_0}\Omega_{\bb G}E_1
     \xrightarrow{\Omega_{\bb G}(\tilde\epsilon_1)}\Omega_{\bb G}^2E_2\to\cdots$$
where $\Omega_{\bb G}$ is the functor $\underline{\Hom}(\bb G,-)$
and $\tilde\epsilon_n$-s are adjoint to the structure maps of $\cc
E$. We define $\pi_{p,q}\cc E$ in $A$-sections as the colimit
   $$\colim_l\bigl(\Hom_{\shs(\Re)}(S^{p-q},\Omega_{\bb G}^{q+l}JE_l(A))\to\Hom_{\shs(\Re)}(S^{p-q},\Omega_{\bb G}^{q+l+1}JE_{l+1}(A))\to\cdots\bigr)$$
once $\cc E$ has been replaced up to levelwise equivalence by a
levelwise fibrant object $J\cc E$ so that the ``loop" constructions
make sense. We also call $\pi_{*,q}\cc E$ the homotopy groups of
weight $q$.

By definition, a map of bispectra is a weak equivalence in $Sp_{\bb
G}(\Re)$ if it induces an isomorphism on bigraded homotopy groups.
We denote the homotopy category of $Sp_{\bb G}(\Re)$ by
$\shsg(\Re)$. It is a compactly generated triangulated category.

In order to define the main $(S^1,\bb G)$-bispectrum of this
section, denoted by $\bb{KG}(A,-)$, we should first establish some
facts for algebra homomorphisms.

Suppose $A,C\in\Re$, then one has a commutative diagram
   $$\xymatrix{J(A\otimes C)\ar@{ >->}[r]\ar[d]_{\gamma_{A,C}}&T(A\otimes C)\ar@{->>}[r]^(.55){\eta_{A\otimes C}}\ar[d]&A\otimes C\ar@{=}[d]\\
               JA\otimes C\ar@{ >->}[r]&T(A)\otimes C\ar@{->>}[r]^(.58){\eta_A\otimes C}&A\otimes C}$$
in which $\gamma_{A,C}$ is uniquely determined by the split
monomorphism $i_A\otimes C:A\otimes C\to T(A)\otimes C$.

One sets $\gamma_{A,C}^0:=1_{A\otimes C}$. We construct inductively
   $$\gamma_{A,C}^n:J^n(A\otimes C)\to J^n(A)\otimes C,\quad n\geq 1.$$
Namely, $\gamma_{A,C}^{n+1}$ is the composite
   $$J^{n+1}(A\otimes C)\xrightarrow{J(\gamma_{A,C}^n)}J(J^n(A)\otimes C)\xrightarrow{\gamma_{J^nA,C}}J^{n+1}(A)\otimes C.$$
Given $n\geq 0$ we define a map
   $$t_n=t_n^{A,C}:\cc K(J^nA,-)\to\cc K(J^{n}(A\otimes C),-\otimes C)=\underline{\Hom}(rC,\cc K(J^{n}(A\otimes C),-))$$
as follows. Let $B\in\Re$ and $(\alpha:J^{n+m}A\to\bb
B(\Omega^m))\in\cc K(J^nA,B)$. We set $t_n(\alpha)\in\cc
K(J^{n}(A\otimes C),B\otimes C)$ to be the composite
   $$J^{n+m}(A\otimes C)\xrightarrow{\gamma_{A,C}^{n+m}}J^{n+m}(A)\otimes C\xrightarrow{\alpha\otimes C}\bb B^\Delta(\Omega^m)\otimes C
     \bl\tau\cong(\bb B\otimes\bb C)^\Delta(\Omega^m).$$
Here $\tau$ is a canonical isomorphism (see~\cite[3.1.3]{CT}) and
$(\bb B\otimes\bb C)^\Delta$ stands for the simplicial ind-algebra
   $$[m,\ell]\mapsto\Hom_{\bb S}(\sd^m\Delta^\ell,(B\otimes C)^\Delta)=(B\otimes C)^{\sd^m\Delta^\ell}\cong k^{\sd^m\Delta^\ell}\otimes(B\otimes C).$$
One has to verify that $t_n$ is consistent with maps
   $$\Hom_{\inda}(J^{n+m}A,\bb B^\Delta(\Omega^m))\xrightarrow{\varsigma}\Hom_{\inda}(J^{n+m+1}A,\bb B^\Delta(\Omega^{m+1})).$$
More precisely, we must show that the map\footnotesize
   $$J^{n+m+1}(A\otimes C)\xrightarrow{J(\gamma_{A,C}^{n+m})}J(J^{n+m}A\otimes C)\xrightarrow{J(\alpha\otimes 1)}J(\bb B^\Delta(\Omega^m)\otimes
   C)\bl{J\tau}\cong J((\bb B\otimes\bb C)^\Delta(\Omega^m))\xrightarrow{\xi_\upsilon}(\bb B\otimes\bb C)^\Delta(\Omega^{m+1})$$
\normalsize is equal to the map\footnotesize
   $$J^{n+m+1}(A\otimes C)\xrightarrow{\gamma_{A,C}^{n+m+1}}J^{n+m+1}A\otimes C\xrightarrow{J\alpha\otimes 1}J(\bb B^\Delta(\Omega^{m}))\otimes C
     \xrightarrow{\xi_\upsilon\otimes 1}\bb B^\Delta(\Omega^{m+1})\otimes C\bl\tau\cong(\bb B\otimes\bb C)^\Delta(\Omega^{m+1}).$$
\normalsize The desired property follows from commutativity of the
diagram (we use here~\cite[3.4]{Gark})
   $$\xymatrix{
   J^{n+m+1}(A\otimes C)\ar[d]_{J(\gamma_{A,C}^{n+m})}\\
   J(J^{n+m}A\otimes C)\ar[r]^{\gamma_{J^{n+m}A,C}}\ar[d]_{J(\alpha\otimes 1)}&J^{n+m+1}A\otimes C\ar@{ >->}[r]\ar[d]_{J(\alpha)\otimes 1}&TJ^{n+m}A\otimes C\ar@{->>}[r]\ar[d]&J^{n+m}A\otimes C\ar[d]^{\alpha\otimes 1}\\
   J(\bb B^\Delta(\Omega^{m})\otimes C)\ar@{=}[d]\ar[r]&J(\bb B^\Delta(\Omega^{m}))\otimes C\ar[d]_{\xi_\upsilon\otimes 1}\ar@{ >->}[r]&T(\bb B^\Delta(\Omega^{m}))\otimes C\ar[d]\ar@{->>}[r]&\bb B^\Delta(\Omega^{m})\otimes C\ar@{=}[d]\\
   J(\bb B^\Delta(\Omega^{m})\otimes C)\ar[d]_{J\tau}\ar[r]&\bb B^\Delta(\Omega^{m+1})\otimes C\ar@{ >->}[r]\ar[d]_\tau&P(\bb B^\Delta(\Omega^{m}))\otimes C\ar@{->>}[r]\ar[d]^\tau&\bb B^\Delta(\Omega^{m})\otimes C\ar[d]^\tau\\
   J((\bb B\otimes\bb C)^\Delta(\Omega^m))\ar[r]_{\xi_\upsilon}&(\bb B\otimes\bb C)^\Delta(\Omega^{m+1})\ar@{ >->}[r]&P(\bb B\otimes\bb C)^\Delta(\Omega^m)\ar@{->>}[r]&(\bb B\otimes\bb C)^\Delta(\Omega^m)}$$
We see that $t_n$ is well defined. We claim that the collection of
maps $(t_n)_n$ defines a map of $S^1$-spectra
   $$t:\bb K(A,B)\to\bb K(A\otimes C,B\otimes C).$$
We have to check that for each $n\geq 0$ the diagram
   $$\xymatrix{\cc K(J^nA,B)\ar[r]^\cong\ar[d]_{t_n}&\Omega\cc K(J^{n+1}A,B)\ar[d]^{\Omega t_{n+1}}\\
               \cc K(J^n(A\otimes C),B\otimes C)\ar[r]^(.47)\cong&\Omega\cc K(J^{n+1}(A\otimes C),B\otimes C)}$$
is commutative. But this directly follows from the definition of the
horizontal maps (see~\cite[5.1]{Gark}) and arguments above made for
$t_n$-s.

If we replace $C$ by $\sigma$ we get that the array
\begin{equation*}
\xymatrix{
&\vdots & \vdots & \vdots\\
\bb K(\sigma^2 A,B):&\cc K(\sigma^2 A,B) & \cc K(J\sigma^2 A,B) & \cc K(J^2\sigma^2 A,B) & \cdots\\
\bb K(\sigma A,B): &\cc K(\sigma A,B)& \cc K(J\sigma A,B) & \cc K(J^2\sigma A,B) & \cdots\\
\bb K(A,B):&\cc K(A,B) & \cc K(JA,B) & \cc K(J^2A,B) & \cdots }
\end{equation*}
together with structure maps
   $$\bb K(\sigma^n A,-)\otimes\bb G\to\bb K(\sigma^{n+1}A,-)$$
defined as adjoint maps to
   $$t:\bb K(\sigma^n A,-)\to\underline{\Hom}(\bb G,\bb K(\sigma^{n+1}A,-))$$
forms a $(S^1,\bb G)$-bispectrum, which we denote by $\bb{KG}(A,-)$.

There is a natural map of $(S^1,\bb G)$-bispectra
   $$\Gamma:\Sigma_{\bb G}^\infty\Sigma^\infty rA\to\bb{KG}(A,-),$$
where $\Sigma_{\bb G}^\infty\Sigma^\infty rA$ is the $(S^1,\bb
G)$-bispectrum represented by the array
\begin{equation*}
\xymatrix{
&\vdots & \vdots \\
\Sigma^\infty rA\otimes\bb G^2:&rA\otimes\bb G^2\ (\cong r(\sigma^2A)) &(rA\wedge S^1)\otimes\bb G^2\ (\cong r(\sigma^2A)\wedge S^1)& \cdots\\
\Sigma^\infty rA\otimes\bb G:&rA\otimes\bb G\ (\cong r(\sigma A))&(rA\wedge S^1)\otimes\bb G\ (\cong r(\sigma A)\wedge S^1)& \cdots\\
\Sigma^\infty rA:&rA & rA\wedge S^1 &\cdots }
\end{equation*}
with obvious structure maps.

By Theorem~\ref{mainres} each map
   $$\Gamma_n:\Sigma^\infty rA\otimes\bb G^n\to\bb{KG}(A,-)_n=\bb K(\sigma^nA,-)$$
is a stable weak equivalence in $Sp(\Re)$. By~\cite{Gark} each $\bb
K(\sigma^nA,-)$ is a fibrant object in $Sp(\Re)$. For each $n\geq 0$
we set
   $$\Theta^\infty_{\bb G}\bb{KG}(A,-)_n=\colim(\bb K(\sigma^nA,-)\xrightarrow{t_0}\bb K(\sigma^{n+1}A,-\otimes\sigma)
     \xrightarrow{\Omega_{\bb G}(t_1)}\bb K(\sigma^{n+2}A,-\otimes\sigma^2)\to\cdots).$$
By specializing a collection of results in~\cite[section~4]{H} to
our setting we have that $\Theta^\infty_{\bb G}\bb{KG}(A,-)$ is a
fibrant bispectrum and the natural map
   $$j:\bb{KG}(A,-)\to\Theta^\infty_{\bb G}\bb{KG}(A,-)$$
is a weak equivalence in $Sp_{\bb G}(\Re)$.

We have thus shown that $\Theta^\infty_{\bb G}\bb{KG}(A,-)$ is an
explicit fibrant replacement for the bispectrum $\Sigma_{\bb
G}^\infty\Sigma^\infty rA$ of the algebra $A$. Denote by $\cc
K^\sigma(A,B)$ the $(0,0)$-space of the bispectrum
$\Theta^\infty_{\bb G}\bb{KG}(A,B)$. It is, by construction, the
colimit
   $$\colim_n\cc K(\sigma^nA,\sigma^n B).$$
Its homotopy groups will be denoted by $\cc K_n^\sigma(A,B)$, $n\geq
0$.

\begin{thm}\label{maingsp}
Let $A$ be an algebra in $\Re$; then the composite map
   $$j\circ\Gamma:\Sigma_{\bb G}^\infty\Sigma^\infty rA\to\Theta^\infty_{\bb G}\bb{KG}(A,-)$$
is a fibrant replacement of $\Sigma_{\bb G}^\infty\Sigma^\infty rA$.
In particular,
   $$\shsg(\Sigma_{\bb G}^\infty\Sigma^\infty rB,\Sigma_{\bb G}^\infty\Sigma^\infty rA)=
     \cc K^\sigma_0(A,B)$$ for all $B\in\Re$.
\end{thm}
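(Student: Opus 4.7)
The plan is to split $j\circ\Gamma$ into its two factors and invoke the ingredients assembled in the paragraphs immediately preceding the theorem. First, I would observe that $\Gamma\colon\Sigma_{\bb G}^\infty\Sigma^\infty rA\to\bb{KG}(A,-)$ is a level weak equivalence of bispectra. Indeed, for each $n$ the level-$n$ component identifies, via $rA\otimes\bb G^n\cong r(\sigma^n A)$, with
$$\Gamma_n\colon\Sigma^\infty r(\sigma^n A)\longrightarrow\bb K(\sigma^n A,-),$$
which is a stable weak equivalence in $Sp(\Re)$ by Theorem~\ref{mainres} applied to $\sigma^n A\in\Re$. Since any level weak equivalence is a weak equivalence in $Sp_{\bb G}(\Re)$, this shows $\Gamma$ is a weak equivalence. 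The remaining factor $j\colon\bb{KG}(A,-)\to\Theta^\infty_{\bb G}\bb{KG}(A,-)$ has already been noted to be a weak equivalence with fibrant target via Hovey's stabilization machinery~\cite{H} applied to the levelwise fibrant bispectrum $\bb{KG}(A,-)$. Composing then exhibits $j\circ\Gamma$ as a weak equivalence into a fibrant object, i.e.\ a fibrant replacement.

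For the Hom-group identity, my approach is to use the Quillen adjunction $\Sigma_{\bb G}^\infty\Sigma^\infty\dashv$ evaluation at bidegree $(0,0)$. Since $rB$ is cofibrant in $U_\bullet\Re$ and $\Theta^\infty_{\bb G}\bb{KG}(A,-)$ is fibrant, the derived adjunction gives
$$\shsg\bigl(\Sigma_{\bb G}^\infty\Sigma^\infty rB,\Sigma_{\bb G}^\infty\Sigma^\infty rA\bigr)\;\cong\;\pi_0\Map_\bullet\bigl(rB,(\Theta^\infty_{\bb G}\bb{KG}(A,-))_{0,0}\bigr).$$
The enriched Yoneda isomorphism $\Map_\bullet(rB,\cc X)\cong\cc X(B)$ combined with the definition $(\Theta^\infty_{\bb G}\bb{KG}(A,-))_{0,0}(B)=\colim_n\cc K(\sigma^n A,\sigma^n B)=\cc K^\sigma(A,B)$ then identifies the right hand side with $\pi_0\cc K^\sigma(A,B)=\cc K_0^\sigma(A,B)$.

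The hard computational point has already been settled in Theorem~\ref{mainres}; what remains is to check that $\bb G$-stabilization preserves fibrancy and produces a weak equivalence on the level-fibrant bispectrum $\bb{KG}(A,-)$, and these are precisely the statements imported from Hovey. Consequently the argument is essentially formal, the only genuine verification being the level-wise identification $\Gamma_n$ with the map produced by Theorem~\ref{mainres} under $rA\otimes\bb G^n\cong r(\sigma^n A)$; no further calculation of bigraded homotopy groups is required.
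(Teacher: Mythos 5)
Your proof is correct and follows exactly the paper's reasoning: the paper gives no separate proof for Theorem~\ref{maingsp} but assembles precisely the facts you cite in the paragraphs immediately preceding the statement ($\Gamma_n$ being level stable equivalences by Theorem~\ref{mainres}, each $\bb K(\sigma^nA,-)$ being fibrant in $Sp(\Re)$, and $j$ being a stable equivalence with fibrant target via Hovey's stabilization results). Your Quillen-adjunction/Yoneda argument for the Hom-group identity is the standard implicit justification for the ``in particular'' clause and is correct.
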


\begin{rem}{\rm
Let $SH(F)$ be the motivic stable homotopy category over a field
$F$. The category $\shsg(\Re)$ shares many properties with $SH(F)$.
The author and Panin~\cite{GP3} have recently computed a fibrant
replacement of $\Sigma^\infty_{s,t}X_+$, $X\in Sm/F$, by developing
the machinery of framed motives. The machinery is based on the
theory of framed correspondences developed by Voevodsky~\cite{Voe2}.
In turn, the computation of Theorem~\ref{maingsp} is possible thanks
to the existence of universal extensions of algebras.

}\end{rem}

Let $F$ be an algebraically closed field of characteristic zero with
an embedding $F\hookrightarrow\bb C$ and let $SH$ be the stable
homotopy category of ordinary spectra. Let $c:SH\to SH(F)$ be the
functor induced by sending a space to the constant presheaf of
spaces on $Sm/F$. Levine~\cite{Lev} has recently shown that $c$ is
fully faithful. This is in fact implied by a result of
Levine~\cite{Lev} saying that the Betti realization functor in the
sense of Ayoub~\cite{Ay}
   $$Re_{B}:SH(F)\to SH$$
gives an isomorphism
   $$Re_{B*}:\pi_{n,0}\cc S_F(F)\to\pi_n(\cc S)$$
for all $n\in\bb Z$. Here $\cc S_F$ is the motivic sphere spectrum
in $SH(F)$ and $\cc S$ is the classical sphere spectrum in $SH$.
These results use recent developments for the spectral sequence
associated with the slice filtration of the motivic sphere $\cc
S_F$.

All this justifies raising the following questions.

\begin{questions}
$(1)$ Is there an admissible category of commutative algebras $\Re$
over the field of complex numbers $\bb C$ such that the natural
functor
   $$c:SH\to\shsg(\Re),$$
induced by the functor $\bb S\to U\Re$ sending a simplicial set to
the constant simplicial functor on $\Re$, is fully faithful?

$(2)$ Let $\Re$ be an admissible category of commutative $\bb
C$-algebras and let $\cc S_{\bb C}$ be the bispectrum
$\Sigma^\infty_{\bb G}\Sigma^\infty r\bb C$. Is it true that the
homotopy groups of weight zero $\pi_{n,0}\cc S_{\bb C}(\bb C)=\cc
K_n^\sigma(\bb C,\bb C)$, $n\geq 0$, are isomorphic to the stable
homotopy groups $\pi_n(\cc S)$ of the classical sphere spectrum?
\end{questions}

We should also mention that one can define $(S^1,\bb G)$-bispectra
by starting at the monoidal category of symmetric spectra
$Sp^{\Sigma}(\Re)$ associated with the monoidal category
$U_\bullet(\Re)_{I,J}$ and then stabilizing the left Quillen functor
$-\otimes\bb G:Sp^{\Sigma}(\Re)\to Sp^{\Sigma}(\Re)$. One produces a
model category $Sp^\Sigma_{\bb G}(\Re)$ of (usual, non-symmetric)
$\bb G$-spectra in $Sp^\Sigma(\Re)$. Using Hovey's~\cite{H}
notation, one has, by definition, $Sp^\Sigma_{\bb G}(\Re)=Sp^{\bb
N}(Sp^\Sigma(\Re),-\otimes\bb G)$.

There is a Quillen equivalence
   $$V:Sp(\Re)\leftrightarrows Sp^\Sigma(\Re):U$$
as well as a Quillen equivalence
   $$V:Sp_{\bb G}(\Re)\leftrightarrows Sp^\Sigma_{\bb G}(\Re):U,$$
where $U$ is the forgetful functor (see~\cite[5.7]{H}).

If we denote by $\shs^\Sigma(\Re)$ and $SH_{S^1,\bb G}^\Sigma(\Re)$
the homotopy categories of $Sp^\Sigma(\Re)$ and $Sp^\Sigma_{\bb
G}(\Re)$ respectively, then one has equivalences of categories
   $$V:\shs(\Re)\leftrightarrows\shs^\Sigma(\Re):U$$
and
   $$V:SH_{S^1,\bb G}(\Re)\leftrightarrows SH_{S^1,\bb G}^\Sigma(\Re):U.$$
We refer the interested reader to~\cite{H,J} for further details.

\section{$K$-motives and $(S^1,\bb G)$-bispectra}

We prove in this section that the triangulated category of
$K$-motives is fully faithfully embedded into the stable homotopy
category of $(S^1,\bb G)$-bispectra $SH_{S^1,\bb G}(\Re)$. In
particular, the triangulated category $kk(\Re)$ of
Corti\~nas--Thom~\cite{CT} is fully faithfully embedded into
$SH_{S^1,\bb G}(\Re)$ by means of a contravariant functor. As an
application we construct an explicit fibrant $(S^1,\bb
G)$-bispectrum representing homotopy $K$-theory in the sense of
Weibel~\cite{W1}.

Throughout this section we assume that $\Re$ is a small tensor
closed, $T$-, $\Gamma$- and $\tau_0$-closed admissible category of
$k$-algebras. It follows that $\sigma A,\Sigma A,M_\infty A\in\Re$
for all $A\in\Re$.

Let $Sp^\Sigma_{\infty,\bb G}(\Re)$ denote the model category of
(usual, non-symmetric) $\bb G$-spectra in $Sp^\Sigma_\infty(\Re)$.
Using Hovey's notation~\cite{H}, $Sp^\Sigma_{\infty,\bb
G}(\Re)=Sp^{\bb N}(Sp^\Sigma_\infty(\Re),-\otimes\bb G)$.

\begin{prop}\label{kmotinter}
The functor
   $$-\otimes\bb G:Sp_\infty^\Sigma(\Re)\to Sp_\infty^\Sigma(\Re)$$
and the canonical functor
   $$F_{0,\bb G}=\Sigma_{\bb G}^\infty:Sp_\infty^\Sigma(\Re)\to Sp^\Sigma_{\infty,\bb G}(\Re)$$
are left Quillen equivalences.
\end{prop}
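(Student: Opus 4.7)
The plan is to reduce both assertions to showing that $-\otimes\bb G$ is a left Quillen self-equivalence of $Sp_\infty^\Sigma(\Re)$. Once that is in hand, the claim about $\Sigma_{\bb G}^\infty=F_{0,\bb G}$ follows directly from Hovey's general criterion~\cite[Thm.~5.1]{H}: passing to spectra with respect to a functor that is already a Quillen self-equivalence produces a Quillen-equivalent model category.

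The work happens in the homotopy category $DK(\Re)$ of $Sp_\infty^\Sigma(\Re)$. By the monoidal Quillen equivalence $V:Sp_\infty(\Re)\leftrightarrows Sp_\infty^\Sigma(\Re):U$ together with Theorem~\ref{kmotives}, $DK(\Re)$ is a symmetric monoidal triangulated category with tensor unit $\mathbb{1}=M_K(k)$ and with $\bb G=M_K(\sigma)$. First I would feed the cone-type extension~\eqref{taunol}
   $$M_\infty k\longrightarrow\tau_0\longrightarrow\sigma$$
into Proposition~\ref{kmo} to obtain a distinguished triangle
   $$M_K(\sigma)\longrightarrow M_K(\tau_0)\longrightarrow M_K(M_\infty k)\longrightarrow M_K(\sigma)[1]$$
in $DK(\Re)$. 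Since the family $\{r(M_\infty A)\to rA\}$ was inverted in $U_\bullet\Re_{I,J}^{\infty}$, the inclusion $k\to M_\infty k$ induces an isomorphism $M_K(M_\infty k)\cong\mathbb{1}$.

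The decisive step is to establish the vanishing $M_K(\tau_0)=0$. For this I would invoke the Corti\~nas--Thom computations~\cite{CT}: in any excisive, homotopy invariant, $M_\infty$-invariant homology theory the algebra $\tau_0$ is acyclic, which is precisely the input that yields their Bass-type isomorphism $\sigma A\simeq\Omega A$ in $kk$. Via the identification $D_{st}(\Re,\ff F)\simeq kk(\Re)$ used in Theorem~\ref{kktop} and the contravariant equivalence of Theorem~\ref{kmotives}, this acyclicity transports to $M_K(\tau_0)=0$ in $DK(\Re)$. Granted this, the above triangle collapses to $M_K(\sigma)\cong\mathbb{1}[-1]$, so $\bb G$ is $\otimes$-invertible with inverse the shift $[1]$; hence $-\otimes\bb G$ induces an auto-equivalence of $DK(\Re)$, and is therefore a left Quillen self-equivalence of $Sp_\infty^\Sigma(\Re)$.

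Finally, I would apply Hovey~\cite[Thm.~5.1]{H} to $\cc M=Sp_\infty^\Sigma(\Re)$ with $T=-\otimes\bb G$: because $T$ is already a Quillen self-equivalence, the canonical adjunction $F_{0,\bb G}\colon\cc M\leftrightarrows Sp^{\bb N}(\cc M,T)=Sp^{\Sigma}_{\infty,\bb G}(\Re)$ is a Quillen equivalence. I expect the only real obstacle to be the vanishing $M_K(\tau_0)=0$; everything else is triangle manipulation and formal stabilization at an invertible object.
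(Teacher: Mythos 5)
Your proposal matches the paper's proof in all essentials: both reduce the statement to showing $-\otimes\bb G$ is a left Quillen self-equivalence of $Sp_\infty^\Sigma(\Re)$, both derive the key invertibility of $\bb G$ from the extension $M_\infty k\rightarrowtail\tau_0\twoheadrightarrow\sigma$ together with the vanishing of $\tau_0$ in $DK(\Re)$ (the paper cites \cite[7.3.2]{CT} directly; you transport the same Corti\~nas--Thom acyclicity through the $kk(\Re)\simeq\cc T^{\op}$ identification, which amounts to the same thing), and both invoke Hovey \cite[5.1]{H} for the second functor. The only cosmetic difference is that you phrase the invertibility as $M_K(\sigma)\cong\mathbb{1}[-1]$ in $DK(\Re)$ and then pass from a derived auto-equivalence to a Quillen equivalence, whereas the paper exhibits an explicit zig-zag of weak equivalences between cofibrant objects from $\Sigma^\infty(r(k))$ to $(\Sigma^\infty\bb G)\wedge S^1$ and argues at the model-category level; both are valid.
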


\begin{proof}
We first observe that $-\otimes\bb G$ is a left Quillen equivalence
on $Sp^\Sigma_{\infty}(\Re)$ if and only if so is
$-\otimes\Sigma^\infty\bb G$. By~\cite[section~4]{CT} there is an
extension
   $$M_\infty k\rightarrowtail\tau_0\twoheadrightarrow\sigma.$$
It follows from~\cite[7.3.2]{CT} that $\Sigma^\infty(r(\tau_0))=0$
in $DK(\Re)$, and hence $\Sigma^\infty(r(\tau_0))$ is weakly
equivalent to zero in $Sp_\infty^\Sigma(\Re)$.

The extension above yields therefore a zig-zag of weak equivalences
between cofibrant objects in $Sp_\infty^\Sigma(\Re)$ from
$\Sigma^\infty(r(M_\infty k))$ to $\Sigma^\infty\bb G\wedge S^1$.
Since $\Sigma^\infty(r(M_\infty k))$ is weakly equivalent to the
monoidal unit $\Sigma^\infty(r(k))$, we see that
$\Sigma^\infty(r(k))$ is zig-zag weakly equivalent to
$(\Sigma^\infty\bb G)\wedge S^1$ in the category of cofibrant
objects in $Sp^\Sigma_\infty(\Re)$.

Since $\Sigma^\infty(r(k))$ is a monoidal unit in
$Sp^\Sigma_\infty(\Re)$, then $-\otimes\Sigma^\infty(r(k))$ is a
left Quillen equivalence on $Sp^\Sigma_\infty(\Re)$, and hence so is
$-\otimes((\Sigma^\infty\bb G)\wedge S^1))$. But $-\wedge S^1$ is a
left Quillen equivalence on $Sp^\Sigma_\infty(\Re)$. Therefore
$-\otimes\Sigma^\infty\bb G$ is a left Quillen equivalence
by~\cite[1.3.15]{Hov}.

The fact that the canonical functor
   $$F_{0,\bb G}:Sp_\infty^\Sigma(\Re)\to Sp^\Sigma_{\infty,\bb G}(\Re)$$
is a left Quillen equivalence now follows from~\cite[5.1]{H}.
\end{proof}

Denote the homotopy category of $Sp_{\infty,\bb G}^\Sigma(\Re)$ by
$SH^{\Sigma,\infty}_{S^1,\bb G}(\Re)$.

\begin{cor}\label{pele}
The canonical functor
   $$F_{0,\bb G}=\Sigma_{\bb G}^\infty:DK(\Re)\to SH^{\Sigma,\infty}_{S^1,\bb G}(\Re)$$
is an equivalence of triangulated categories.
\end{cor}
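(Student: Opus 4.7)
The plan is to deduce this corollary directly from Proposition \ref{kmotinter} by passing to homotopy categories. The preceding proposition established that $F_{0,\bb G}=\Sigma^\infty_{\bb G}:Sp^\Sigma_\infty(\Re)\to Sp^\Sigma_{\infty,\bb G}(\Re)$ is a left Quillen equivalence. By the general theory of Quillen equivalences (see Hovey \cite{H}), the total left derived functor $LF_{0,\bb G}$ is an equivalence between the homotopy categories $\Ho(Sp^\Sigma_\infty(\Re))=DK(\Re)$ and $\Ho(Sp^\Sigma_{\infty,\bb G}(\Re))=SH^{\Sigma,\infty}_{S^1,\bb G}(\Re)$. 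On cofibrant objects $LF_{0,\bb G}$ agrees with $F_{0,\bb G}$ itself, so we recover the canonical functor described in the statement.

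It then remains to observe that this equivalence is triangulated. The categories $Sp^\Sigma_\infty(\Re)$ and $Sp^\Sigma_{\infty,\bb G}(\Re)$ are stable model categories: the suspension endofunctor $-\wedge S^1$ is a Quillen self-equivalence on both, which is precisely the condition from \cite[Ch.~7]{Hov} that endows their homotopy categories with canonical triangulated structures. A left Quillen functor between stable model categories preserves the suspension up to natural isomorphism and sends cofiber sequences to cofiber sequences, hence its total left derived functor is exact (triangulated). Applying this to $F_{0,\bb G}$ yields the desired triangulated equivalence.

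No step is a real obstacle here: the content is entirely absorbed by Proposition \ref{kmotinter}, and everything else is a standard invocation of Hovey's machinery for stable model categories. The only mild subtlety is confirming that the two suspension operations on $SH^{\Sigma,\infty}_{S^1,\bb G}(\Re)$ (by $S^1$ and by $\bb G$) are both equivalences, but this is automatic from the construction of $Sp^{\bb N}(Sp^\Sigma_\infty(\Re),-\otimes\bb G)$ and the fact, reproved in Proposition \ref{kmotinter}, that $-\otimes\bb G$ is already a Quillen self-equivalence of $Sp^\Sigma_\infty(\Re)$.
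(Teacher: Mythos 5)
Your proposal is correct and is exactly the argument the paper implicitly intends: Corollary~\ref{pele} is stated without a separate proof precisely because it is an immediate consequence of Proposition~\ref{kmotinter} combined with the standard facts that a Quillen equivalence induces an equivalence of homotopy categories and that a left Quillen functor between stable model categories has a triangulated total left derived functor. Your filling in of these routine steps matches the paper's reasoning.
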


Recall that $Sp_\infty^\Sigma(\Re)$ is the Bousfield localization of
$Sp^\Sigma(\Re)$ with respect to
   $$\{F_s(r(M_\infty A))\to F_s(rA)\mid A\in\Re,s\geq 0\}.$$
It follows that the induced triangulated functor is fully faithful
   $$DK(\Re)\to\shs^\Sigma(\Re).$$

In a similar fashion, $Sp_{\infty,\bb G}^\Sigma(\Re)$ can be
obtained from $Sp^\Sigma_{\bb G}(\Re)$ by Bousfield localization
with respect to
   $$\{F_{k,\bb G}(F_s(r(M_\infty A)))\to F_{k,\bb G}(F_s(rA))\mid A\in\Re,k,s\geq 0\}.$$
We summarize all of this together with Proposition~\ref{kmotinter}
as follows.

\begin{thm}\label{arshavin}
There is an adjoint pair of triangulated functors
   $$\Phi:SH_{S^1,\bb G}^\Sigma(\Re)\leftrightarrows DK(\Re):\Psi$$
such that $\Psi$ is fully faithful. Moreover, $\cc T=\kr\Phi$ is the
localizing subcategory of $SH_{S^1,\bb G}^\Sigma(\Re)$ generated by
the compact objects
   $$\{\cone(F_{k,\bb G}(F_s(r(M_\infty A)))\to F_{k,\bb G}(F_s(rA)))\mid A\in\Re\}$$
and $DK(\Re)$ is triangle equivalent to $SH^\Sigma_{S^1,\bb
G}(\Re)/\cc T$.
\end{thm}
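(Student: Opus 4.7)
The plan is to construct the adjoint pair $(\Phi,\Psi)$ as the composite of two adjunctions: the identity--identity Quillen pair arising from the Bousfield localization $Sp^\Sigma_{\bb G}(\Re)\rightleftarrows Sp^\Sigma_{\infty,\bb G}(\Re)$, and the equivalence $F_{0,\bb G}:DK(\Re)\xrightarrow{\sim}SH^{\Sigma,\infty}_{S^1,\bb G}(\Re)$ of Corollary~\ref{pele}.

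First, by the discussion immediately preceding the theorem, $Sp^\Sigma_{\infty,\bb G}(\Re)$ is the Bousfield localization of $Sp^\Sigma_{\bb G}(\Re)$ with respect to the set
$$S=\{F_{k,\bb G}(F_s(r(M_\infty A)))\to F_{k,\bb G}(F_s(rA))\mid A\in\Re,k,s\geq 0\}.$$
The identity functors form a Quillen pair whose derived functors give an adjunction of triangulated functors
$$L:SH^\Sigma_{S^1,\bb G}(\Re)\rightleftarrows SH^{\Sigma,\infty}_{S^1,\bb G}(\Re):R,$$
in which $R$ is fully faithful, being the inclusion of the subcategory of $S$-local objects, by the standard Bousfield localization theorem~\cite{Hir}. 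Setting $\Phi:=F_{0,\bb G}^{-1}\circ L$ and $\Psi:=R\circ F_{0,\bb G}$ then yields the desired adjoint pair between $SH^\Sigma_{S^1,\bb G}(\Re)$ and $DK(\Re)$, with $\Psi$ fully faithful as a composite of a fully faithful functor and an equivalence.

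It remains to identify $\cc T=\kr\Phi=\kr L$. Each generator $F_{k,\bb G}(F_s(rA))$ is compact in $SH^\Sigma_{S^1,\bb G}(\Re)$ (the analogue of~\cite[6.5.3]{Hov} for $(S^1,\bb G)$-bispectra), hence the cones of the maps in $S$ are compact. A Neeman-type description of Bousfield localization in compactly generated triangulated categories then identifies $\kr L$ with the localizing subcategory generated by $\{\cone(f)\mid f\in S\}$ and realizes $SH^{\Sigma,\infty}_{S^1,\bb G}(\Re)\simeq DK(\Re)$ as the Verdier quotient $SH^\Sigma_{S^1,\bb G}(\Re)/\cc T$. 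Since $F_{k,\bb G}$ and $F_s$ become invertible suspensions after stabilization and localizing subcategories are closed under shifts, this generating family reduces to the one indexed only by $A\in\Re$ stated in the theorem.

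The main technical input is the identification of $\kr L$ with the localizing subcategory generated by compact cones; this hinges on compact generation of $SH^\Sigma_{S^1,\bb G}(\Re)$ and on the compactness of the domain and codomain of each map in $S$. The remaining verifications---that the Quillen pair is well-defined, that $\Psi$ is fully faithful, and that the reduction of the generating set respects shifts---are routine bookkeeping about triangulated functors and stabilization.
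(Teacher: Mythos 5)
Your proof is correct and reconstructs the paper's (essentially unstated) argument. The paper only remarks that the theorem ``summarizes'' the preceding discussion---that $Sp^\Sigma_{\infty,\bb G}(\Re)$ is the Bousfield localization of $Sp^\Sigma_{\bb G}(\Re)$ at the displayed set of maps, combined with Corollary~\ref{pele} identifying $DK(\Re)$ with $SH^{\Sigma,\infty}_{S^1,\bb G}(\Re)$ via $F_{0,\bb G}$---and your composition of the localization adjunction with that equivalence, together with the Neeman-type identification of the acyclics as the localizing subcategory generated by the compact cones, is precisely the intended proof.
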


\begin{cor}\label{zidan}
There is a contravariant fully faithful triangulated functor
   $$kk(\Re)\to SH_{S^1,\bb G}(\Re).$$
\end{cor}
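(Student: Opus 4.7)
The plan is to assemble the corollary directly from three building blocks that have already been established in the paper: the identification of $kk(\Re)$ with a full subcategory of $DK(\Re)$, the fully faithful embedding of $DK(\Re)$ into the symmetric $(S^1,\bb G)$-bispectrum category, and the Quillen equivalence between non-symmetric and symmetric bispectra.

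First I would invoke Theorem~\ref{kktop}, which supplies a contravariant equivalence of triangulated categories
\[
  kk(\Re)\xrightarrow{\ \sim\ }\cc T^{\op},\qquad A\longmapsto M_K(A),
\]
where $\cc T\subset DK(\Re)$ is the full subcategory spanned by the $K$-motives $\{M_K(A)\}_{A\in\Re}$. The inclusion $\cc T\hookrightarrow DK(\Re)$ is by definition a fully faithful triangulated functor, so post-composition with it gives a contravariant fully faithful triangulated functor
\[
  kk(\Re)\longrightarrow DK(\Re)^{\op}.
\]

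Next I would apply Theorem~\ref{arshavin}: the right adjoint $\Psi:DK(\Re)\to SH_{S^1,\bb G}^\Sigma(\Re)$ is a fully faithful triangulated functor. Composing with the previous step yields a contravariant fully faithful triangulated functor
\[
  kk(\Re)\longrightarrow SH_{S^1,\bb G}^\Sigma(\Re)^{\op}.
\]
Finally, the Quillen equivalence $V:Sp_{\bb G}(\Re)\rightleftarrows Sp^\Sigma_{\bb G}(\Re):U$ discussed at the end of the previous section induces an equivalence of triangulated categories $SH_{S^1,\bb G}(\Re)\simeq SH_{S^1,\bb G}^\Sigma(\Re)$; transporting along this equivalence produces the desired contravariant fully faithful triangulated functor
\[
  kk(\Re)\longrightarrow SH_{S^1,\bb G}(\Re).
\]

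There is no genuine obstacle here, since the structural results do all the work; the only point that deserves a line of verification is that the composition really lands in the opposite category, i.e.\ that the contravariance of Theorem~\ref{kktop} is preserved after composing with the covariant functors $\cc T\hookrightarrow DK(\Re)$, $\Psi$, and the symmetrisation equivalence. This is immediate from the way the composition is formed, so the proof reduces to a one-line citation of Theorem~\ref{kktop}, Theorem~\ref{arshavin}, and the Quillen equivalence between $Sp_{\bb G}(\Re)$ and $Sp^\Sigma_{\bb G}(\Re)$.
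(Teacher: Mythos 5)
Your proof is correct and follows precisely the route the paper intends, since the paper's own argument simply cites Theorems~\ref{kktop} and~\ref{arshavin}; you have merely unpacked the obvious composition (equivalence onto $\cc T^{\op}$, inclusion into $DK(\Re)^{\op}$, the fully faithful $\Psi$, and the symmetrisation equivalence) that the paper leaves implicit.
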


\begin{proof}
This follows from Theorems~\ref{kktop} and~\ref{arshavin}.
\end{proof}

Let $Sp_{\infty,\bb G}(\Re)$ denote the model category of $\bb
G$-spectra in $Sp_\infty(\Re)$. Using Hovey's notation~\cite{H},
$Sp_{\infty,\bb G}(\Re)=Sp^{\bb N}(Sp_\infty(\Re),-\otimes\bb G)$.
As above, there is a Quillen equivalence
   $$V:Sp_{\infty,\bb G}(\Re)\leftrightarrows Sp^\Sigma_{\infty,\bb G}(\Re):U,$$
where $U$ is the forgetful functor. It induces an equivalence of
triangulated categories
   $$V:SH^\infty_{S^1,\bb G}(\Re)\leftrightarrows SH^{\Sigma,\infty}_{S^1,\bb G}(\Re):U,$$
where the left hand side is the homotopy category of $Sp_{\infty,\bb
G}(\Re)$.

Given $A\in\Re$, consider a $(S^1,\bb G)$-bispectrum
$\bb{KG}^{st}(A,-)$ which we define at each $B\in\Re$ as
   $$\colim_n(\bb{KG}(A,B)\to\bb{KG}(A,M_\infty k\otimes B))\to\bb{KG}(A,M_\infty^2k\otimes B)\to\cdots)$$
It can also be presented as the array
\begin{equation*}
\xymatrix{
&\vdots & \vdots & \vdots\\
\bb K^{st}(\sigma^2 A,B):&\cc K^{st}(\sigma^2 A,B) & \cc K^{st}(J\sigma^2 A,B) & \cc K^{st}(J^2\sigma^2 A,B) & \cdots\\
\bb K^{st}(\sigma A,B): &\cc K^{st}(\sigma A,B)& \cc K^{st}(J\sigma A,B) & \cc K^{st}(J^2\sigma A,B) & \cdots\\
\bb K^{st}(A,B):&\cc K^{st}(A,B) & \cc K^{st}(JA,B) & \cc
K^{st}(J^2A,B) & \cdots }
\end{equation*}

It follows from Theorem~\ref{mainresst} that the canonical map of
bispectra
   $$\Sigma_{\bb G}^\infty\Sigma^\infty rA\to\bb{KG}^{st}(A,-)$$
is a level weak equivalence in $Sp_{\infty,\bb G}(\Re)$. In fact we
can say more. We shall show below that $\bb{KG}^{st}(A,-)$ is a
fibrant bispectrum and this arrow is a fibrant replacement of
$\Sigma_{\bb G}^\infty\Sigma^\infty rA$ in $Sp_{\infty,\bb G}(\Re)$.
To this end we have to prove the Cancellation Theorem for the
$S^1$-spectrum $\bb{K}^{st}(A,-)$. The Cancellation Theorem for
$K$-theory of algebraic varieties was proved in~\cite{GP2}. It is
also reminiscent of the Cancellation Theorem for motivic cohomology
proved by Voevodsky~\cite{Voe3}.

\begin{thm}[Cancellation for $K$-theory]\label{novgor}
Each structure map of the bispectrum $\bb{KG}^{st}(A,-)$
   \begin{equation*}\label{len}
    \bb{K}^{st}(\sigma^n A,-)\to\Omega_{\bb G}\bb{K}^{st}(\sigma^{n+1}A,-),\quad n\geq 0,
   \end{equation*}
is a weak equivalence of fibrant $S^1$-spectra.
\end{thm}

\begin{proof}
It follows from Proposition~\ref{kmotinter} that the functor
   $$-\otimes\bb G:Sp_\infty(\Re)\to Sp_\infty(\Re)$$
is a left Quillen equivalence. It remains to apply
Theorem~\ref{mainresst}.
\end{proof}

\begin{cor}\label{ronie}
For any $A\in\Re$ the bispectrum $\bb{KG}^{st}(A,-)$ is fibrant in
$Sp_{\infty,\bb G}(\Re)$. Moreover, the canonical map of bispectra
   $$\Sigma_{\bb G}^\infty\Sigma^\infty rA\to\bb{KG}^{st}(A,-)$$
is a fibrant resolution for $\Sigma_{\bb G}^\infty\Sigma^\infty rA$
in $Sp_{\infty,\bb G}(\Re)$.
\end{cor}

The following result says that the bispectrum $\bb{KG}^{st}(A,-)$ is
$(2,1)$-periodic and represents stable algebraic Kasparov $K$-theory
(cf.~\cite[6.8-6.9]{VoeICM}).

\begin{thm}\label{maradona}
For any algebras $A,B\in\Re$ and any integers $p,q$ there is an
isomorphism of abelian groups
   $$\pi_{p,q}(\bb{KG}^{st}(A,B))\cong\Hom_{SH_{S^1,\bb G}(\Re)}
     (\Sigma_{\bb G}^\infty\Sigma^\infty rB\otimes S^{p-q}\otimes\bb G^q,\bb{KG}^{st}(A,-))\cong\bb K^{st}_{p-2q}(A,B).$$
In particular,
   $$\pi_{p,q}(\bb{KG}^{st}(A,B))\cong\pi_{p+2,q+1}(\bb{KG}^{st}(A,B)).$$
\end{thm}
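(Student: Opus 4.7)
The plan is to prove the theorem in two stages, corresponding to the two isomorphisms displayed in the statement. The first isomorphism
\[
\pi_{p,q}(\bb{KG}^{st}(A,B))\cong\Hom_{SH_{S^1,\bb G}(\Re)}\bigl(\Sigma_{\bb G}^\infty\Sigma^\infty rB\otimes S^{p-q}\otimes\bb G^q,\bb{KG}^{st}(A,-)\bigr)
\]
is essentially a matter of unwinding the definition of bigraded homotopy groups. Since each level $\bb K^{st}(\sigma^n A,-)$ is a fibrant $\Omega$-spectrum in $Sp(\Re)$ (so $\bb{KG}^{st}(A,-)$ is levelwise fibrant), the stabilization $\Theta^\infty_{\bb G}\bb{KG}^{st}(A,-)$ is a fibrant replacement in $Sp_{\bb G}(\Re)$ exactly as in the construction preceding Theorem~\ref{maingsp}. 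The standard bispectrum adjunctions $F_{k,\bb G}\circ F_s \dashv Ev_{k,s}$ then produce the desired identification of $\pi_{p,q}$ with Hom-sets in $SH_{S^1,\bb G}(\Re)$.

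For the second isomorphism I would pass to the $M_\infty$-localized theory. Because $\bb{KG}^{st}(A,-)$ is built out of $\bb K^{st}$-spectra, it is $M_\infty$-invariant in its $B$-variable; in particular it is local with respect to the family $\{F_{k,\bb G}(F_s(r(M_\infty B)))\to F_{k,\bb G}(F_s(rB))\}$ that defines $Sp_{\infty,\bb G}^\Sigma(\Re)$, so Hom-sets in $SH_{S^1,\bb G}(\Re)$ into $\bb{KG}^{st}(A,-)$ agree with Hom-sets in $SH^{\infty}_{S^1,\bb G}(\Re)$. By Corollary~\ref{pele} the latter is equivalent under $F_{0,\bb G}$ to $DK(\Re)$, and the proof of Proposition~\ref{kmotinter} gives a zig-zag of weak equivalences between $\Sigma^\infty(r(k))$ and $(\Sigma^\infty\bb G)\wedge S^1$ coming from the extension $M_\infty k\rightarrowtail\tau_0\twoheadrightarrow\sigma$; hence $\Sigma^\infty\bb G\simeq S^{-1}$ in $DK(\Re)$, so $S^{p-q}\otimes\bb G^q\simeq S^{p-2q}$ there. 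Combining these facts with Theorem~\ref{mainresst} (which identifies $\bb{KG}^{st}(A,-)$ as the image of $M_K(A)$ under $F_{0,\bb G}$, up to fibrant replacement) and with Theorem~\ref{kmotives} reduces the Hom to
\[
\Hom_{DK(\Re)}\bigl(M_K(B)[p-2q],M_K(A)\bigr)\cong\bb K^{st}_{p-2q}(A,B).
\]
The periodicity statement is then immediate from the substitution $(p,q)\mapsto(p+2,q+1)$, which preserves $p-2q$.

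The main obstacle will be the $\bb G$-periodicity identification: one must verify that the zig-zag of Proposition~\ref{kmotinter} upgrades to a genuine isomorphism $\Sigma^\infty\bb G\otimes S^1\cong\Sigma^\infty r(k)$ in $DK(\Re)$ that is compatible with the monoidal structure on bispectra, and that this compatibility is preserved under $F_{0,\bb G}$ and its quasi-inverse on $SH^{\Sigma,\infty}_{S^1,\bb G}(\Re)$. The identification of $\bb{KG}^{st}(A,-)$ with $M_K(A)$ also requires care, as it demands combining the level-wise statement of Theorem~\ref{mainresst} along the $\sigma^n$-filtration with a levelwise-to-stable comparison argument in $Sp_{\infty,\bb G}(\Re)$; once these monoidal and model-categorical bookkeeping matters are settled, the chain of isomorphisms above follows formally.
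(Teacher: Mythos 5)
Your argument is correct and follows essentially the same route as the paper's proof: level weak equivalence $\Sigma_{\bb G}^\infty\Sigma^\infty rA\simeq\bb{KG}^{st}(A,-)$ in $Sp_{\infty,\bb G}(\Re)$ (via Theorem~\ref{mainresst}), passage to $DK(\Re)$ through Corollary~\ref{pele}, the periodicity relation $\Sigma^\infty(S^1\otimes\bb G)\cong\mathbf 1$ from Proposition~\ref{kmotinter}, and conclusion by Theorem~\ref{kmotives}. You are somewhat more explicit than the paper in two places that it treats implicitly: (i) separating the identification of $\pi_{p,q}$ with a Hom-set in the \emph{unlocalized} $SH_{S^1,\bb G}(\Re)$ via the fibrant replacement $\Theta^\infty_{\bb G}\bb{KG}^{st}(A,-)$, and (ii) justifying the passage from $SH_{S^1,\bb G}(\Re)$ to $SH^\infty_{S^1,\bb G}(\Re)$ by noting that $\bb{KG}^{st}(A,-)$ is $M_\infty$-local (so maps into it agree in the two homotopy categories), whereas the paper directly writes the Hom in $SH^\infty_{S^1,\bb G}(\Re)$; this explicitness is a welcome clarification rather than a departure.
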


\begin{proof}
By Corollary~\ref{ronie} the bispectrum $\bb{KG}^{st}(A,-)$ is a
fibrant replacement for $\Sigma_{\bb G}^\infty\Sigma^\infty rA$ in
$Sp_{\infty,\bb G}(\Re)$. Therefore,
   $$\pi_{p,q}(\bb{KG}^{st}(A,B))\cong\Hom_{SH_{S^1,\bb G}^\infty(\Re)}(\Sigma_{\bb G}^\infty\Sigma^\infty rB\otimes S^{p-q}\otimes\bb G^q,
     \Sigma_{\bb G}^\infty\Sigma^\infty rA).$$
Corollary~\ref{pele} implies that the right hand side is isomorphic
to $DK(\Re)(M_K(B)\otimes S^{p-q}\otimes\bb G^q,M_K(A))$. On the
other hand,
   $$DK(\Re)(M_K(B)\otimes S^{p-q}\otimes\bb G^q,M_K(A))\cong
     DK(\Re)(M_K(B)\otimes S^{p-2q}\otimes S^{q}\otimes\bb G^q,M_K(A)).$$
The proof of Proposition~\ref{kmotinter} implies
$\Sigma^\infty(S^1\otimes\bb G)$ is isomorphic to the monoidal unit.
Therefore,
   $$DK(\Re)(M_K(B)\otimes S^{p-2q}\otimes S^{q}\otimes\bb G^q,M_K(A))\cong DK(\Re)(M_K(B)[p-2q],M_K(A)).$$
Our statement now follows from Theorem~\ref{kmotives}.
\end{proof}

The next statement says that the bispectrum $\bb{KG}^{st}(k,B)$
gives a model for homotopy $K$-theory in the sense of
Weibel~\cite{W1} (cf.~\cite[6.9]{VoeICM}).

\begin{cor}\label{ronaldo}
For any algebra $B\in\Re$ and any integers $p,q$ there is an
isomorphism
   $$\pi_{p,q}(\bb{KG}^{st}(k,B))\cong KH_{p-2q}(B).$$
\end{cor}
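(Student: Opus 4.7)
The plan is to combine Theorem~\ref{maradona} specialized at $A=k$ with the identification of $\bb K^{st}_n(k,B)$ as homotopy $K$-theory, which in turn follows from Theorem~\ref{kmotives} and Corollary~\ref{kak}. No new technical machinery is needed; this is a formal chain of identifications, which is why it appears as a corollary rather than a theorem.

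Concretely, I would first apply Theorem~\ref{maradona} with $A$ replaced by $k$, yielding a natural isomorphism
\[
\pi_{p,q}(\bb{KG}^{st}(k,B))\cong\bb K^{st}_{p-2q}(k,B).
\]
Thus the problem reduces to showing $\bb K^{st}_n(k,B)\cong KH_n(B)$ for all integers $n$, where I abbreviate $n=p-2q$.

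For this, I would invoke Theorem~\ref{kmotives} (with the roles of $A$ and $B$ set to $A=k$ and $B=B$) to obtain the natural isomorphism
\[
\bb K^{st}_n(k,B)\cong DK(\Re)(M_K(B)[n],M_K(k)).
\]
Then Corollary~\ref{kak} identifies the right-hand side with $KH_n(B)$, the $n$-th homotopy $K$-theory group of $B$ in the sense of Weibel. Concatenating these isomorphisms produces
\[
\pi_{p,q}(\bb{KG}^{st}(k,B))\cong\bb K^{st}_{p-2q}(k,B)\cong DK(\Re)(M_K(B)[p-2q],M_K(k))\cong KH_{p-2q}(B),
\]
which is the claim.

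The only point where one should be slightly careful is that Corollary~\ref{kak} requires $\Re$ to be $\Gamma$-closed; this hypothesis is already in force throughout this section (the section opens by assuming $\Re$ is tensor closed, $T$-, $\Gamma$- and $\tau_0$-closed), so there is no additional assumption to verify. There is no genuine obstacle here: the content of the corollary lies entirely in the preceding theorem and in the identification of $K$-motives with Corti\~nas--Thom $kk$-theory that was established in Theorem~\ref{kktop} (and hence in Corollary~\ref{kak}).
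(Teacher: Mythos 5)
Your proof is correct and follows the same overall strategy as the paper: reduce via Theorem~\ref{maradona} (at $A=k$) to the identification $\bb K^{st}_{n}(k,B)\cong KH_n(B)$. The only difference is in how that identification is justified. The paper simply cites a result from the companion paper (\cite[9.11]{Gark}) for the isomorphism $\bb K^{st}_n(k,B)\cong KH_n(B)$, whereas you derive it internally by combining Theorem~\ref{kmotives} (with $A=k$) and Corollary~\ref{kak} (with $A=B$), both passing through the Hom-group $DK(\Re)(M_K(B)[n],M_K(k))$. Your route is logically sound and has the mild advantage of being self-contained within this paper (modulo the external inputs already used inside Corollary~\ref{kak}); the paper's route is shorter because it outsources the step to a spectrum-level statement already proved elsewhere. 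Both are fine, and your observation that the $\Gamma$-closed hypothesis for Corollary~\ref{kak} is already in force in this section is exactly the right check to make.
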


\begin{proof}
This follows from the preceding theorem and~\cite[9.11]{Gark}.
\end{proof}

\end{document}